\def\>{\geq}
\def\subset{\subseteq}
\newtheorem{theorem}{Theorem}[section]
\newtheorem{lemma}[theorem]{Lemma}
\newtheorem{corollary}[theorem]{Corollary}
\theoremstyle{remark}
\newtheorem{remark}[theorem]{Remark}
\theoremstyle{definition}
\newtheorem{definition}[theorem]{Definition}
\theoremstyle{definition}
\numberwithin{equation}{section}
\def\div{\operatorname{div}}
\def\Supp{\operatorname{Supp}}
\def\dim{\operatorname{dim}}
\author{Shikha Bhutani}
\address{Michigan State University\\
Michigan State University\\
619 Red Cedar Rd, \\
East Lansing, MI 48824}
\email{bhutanishikha11@gmail.com, bhutani4@msu.edu}
\date{}
\newcommand\shortitle{On K-V Vanishing for Surfaces of del Pezzo type over imperfect fields}
\begin{document}

\title{On Kawamata-Viehweg Vanishing for Surfaces of del Pezzo type over imperfect fields}

\begin{abstract}
We prove the Kawamata-Viehweg vanishing theorem for surfaces of del Pezzo type over imperfect fields of characteristic \( p > 5 \). As a consequence, we deduce the Grauert--Riemenschneider vanishing theorem for excellent divisorial log terminal threefolds whose closed points have (possibly imperfect) residue fields of positive
characteristic \( p > 5 \). Finally, under this setup, we show that three-dimensional klt singularities are rational.

\end{abstract}
\maketitle
\tableofcontents

\section{Introduction}

 Vanishing theorems are foundational tools in the Log Minimal Model Program (LMMP). One of the key vanishing theorems in birational geometry is the Kodaira vanishing theorem, along with its generalization, the Kawamata-Viehweg vanishing theorem. Although proofs of these theorems exist in characteristic zero, it is well known that they can fail in every positive characteristic in high enough dimensions (see \cite{totaro_failure}).

Fano varieties and their log generalizations are expected to behave better in this setting, and it is conjectured that Kodaira-type vanishing theorems hold for Fano varieties of a given dimension \( n \), provided the characteristic is sufficiently large, say \( p(n) \) (see \cite[Open Questions]{totaro_failure}). This conjecture has been confirmed in dimension \( 2 \) over perfect fields by \cite{cascini_log_2017}. In \cite{arvidson_vanishing}, the authors further determine the optimal bound \( p(2) = 5 \) (again in the perfect field setting). 

In this paper, we show that the result continues to hold over imperfect fields.

\newpage
 
\begin{theorem}\label{T1}
Let \( X \) be a surface of del Pezzo type over a field \( k \) of characteristic \( p > 5 \) (possibly imperfect). Let \( D \) be a Weil divisor and suppose that there exists an effective \( \mathbb{Q} \)-divisor \( \Delta \) such that \( (X, \Delta) \) is a klt pair and \( D - (K_X + \Delta) \) is big and nef. Then
\[
H^i(X, \mathcal{O}_X(D)) = 0 \quad \text{for all } i > 0.
\]

\end{theorem}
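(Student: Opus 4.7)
The plan is to reduce to the known result over algebraically closed fields of characteristic $p>5$ (due to Arvidsson--Bernasconi--Lacini) by base change, while carefully controlling the failure of geometric normality that can arise in the imperfect setting.

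First, I would reduce to the case where $k$ is separably closed. Since $k^{\mathrm{sep}}/k$ is a filtered colimit of finite \'etale extensions and hence faithfully flat, cohomology commutes with this base change:
\[
H^i(X, \mathcal{O}_X(D)) \otimes_k k^{\mathrm{sep}} \cong H^i(X_{k^{\mathrm{sep}}}, \mathcal{O}_{X_{k^{\mathrm{sep}}}}(D_{k^{\mathrm{sep}}})),
\]
and all the hypotheses---being of del Pezzo type, the klt property of $(X,\Delta)$, and bigness and nefness of $D-(K_X+\Delta)$---are preserved under separable extensions. Thus it suffices to prove the vanishing after replacing $k$ by $k^{\mathrm{sep}}$.

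Next, I would base change further to the algebraic closure $\bar k$. Since $\bar k/k^{\mathrm{sep}}$ is purely inseparable, $\bar X := X\times_k \bar k$ may fail to be normal, or even reduced. Let $\nu\colon Y \to \bar X^{\mathrm{red}}$ be the normalization and consider the conductor short exact sequence
\[
0 \to \mathcal{O}_{\bar X^{\mathrm{red}}} \to \nu_* \mathcal{O}_Y \to \mathcal{Q} \to 0,
\]
twisted by $\mathcal{O}_{\bar X}(D_{\bar k})$. Since $\mathcal{Q}$ is supported in dimension at most one, its contribution to higher cohomology is controlled; a parallel nilradical sequence takes care of the potentially non-reduced structure. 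The central task is then to produce, via adjunction / the different, an effective boundary $\Delta_Y$ on $Y$ such that $(Y,\Delta_Y)$ is a klt surface of del Pezzo type over the algebraically closed field $\bar k$ and the pullback of $D-(K_X+\Delta)$ remains big and nef. Once this is achieved, Arvidsson--Bernasconi--Lacini yields the vanishing of $H^i(Y, \nu^*\mathcal{O}_{\bar X}(D_{\bar k}))$, and chasing through the conductor sequence gives the desired vanishing on $\bar X$, and hence on $X$ by flat descent.

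The main obstacle is the klt-preservation step in the previous paragraph. Under inseparable base change and subsequent normalization, the conductor divisor may appear with coefficients that push $\Delta_Y$ outside the klt range, and del Pezzo type is not obviously preserved either. I expect this to require a careful case analysis---potentially using the classification of klt surface singularities in positive characteristic, the minimal model program for surfaces over imperfect fields (Tanaka), and bounds on the different in the sense of Koll\'ar. An alternative route, which sidesteps some of this, would be to prove directly that surfaces of del Pezzo type over $k$ in characteristic $p>5$ are globally $F$-regular (which is known over perfect fields) and then deduce the vanishing from $F$-regularity intrinsically on $X$; the difficulty there is shifted to establishing $F$-regularity over imperfect fields, where standard Bertini-type reductions become delicate.
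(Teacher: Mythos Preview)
Your proposal has a genuine gap, and it is not merely the klt-preservation issue you already flag. Even granting that you could produce a klt del Pezzo type pair $(Y,\Delta_Y)$ on the normalization of $\bar X^{\mathrm{red}}$ and invoke Arvidsson--Bernasconi--Lacini there, the conductor and nilradical sequences run in the wrong direction to yield $H^1$-vanishing on $\bar X$. From
\[
0 \longrightarrow \mathcal{O}_{\bar X^{\mathrm{red}}}(D) \longrightarrow \nu_*\mathcal{O}_Y(\nu^*D) \longrightarrow \mathcal{Q}(D) \longrightarrow 0
\]
one obtains that $H^1(\bar X^{\mathrm{red}},\mathcal{O}(D))$ is the cokernel of $H^0(\nu_*\mathcal{O}_Y(\nu^*D)) \to H^0(\mathcal{Q}(D))$, and there is no reason for this map to be surjective. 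The same problem recurs for the nilpotent ideal filtration relating $\bar X$ to $\bar X^{\mathrm{red}}$: the graded pieces are sheaves supported on curves whose $H^0$ you cannot control. So ``$\mathcal{Q}$ supported in dimension $\leq 1$'' helps with $H^2$ but not with $H^1$. As for the klt step itself, the conductor of the normalization after a purely inseparable base change is governed by Tate's formula for $\Omega_{k'/k}$ and can genuinely have coefficient $\geq 1$; there is no known mechanism, short of the kind of classification you allude to, that keeps you in the klt range. Your alternative $F$-regularity route runs into the same wall: the proofs that log del Pezzo surfaces are globally $F$-regular in large characteristic ultimately rely on the perfect-field classification.

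The paper takes a completely different route that avoids ever base-changing to $\bar k$. One first descends the whole setup to a finitely generated extension $E$ of $\mathbb{F}_p$, and then realizes $X_E$ as the generic fibre of a projective morphism $\phi:\mathcal{X}\to\mathcal{V}$ of varieties over a \emph{perfect} field, with $E$ the function field of $\mathcal{V}$. The proof is by induction on $\dim\mathcal{V}$; the base case $\dim\mathcal{V}=0$ is exactly the Arvidsson--Bernasconi--Lacini theorem. For the inductive step one localizes at the generic point of a general hyperplane in $\mathcal{V}$, so that $\mathcal{X}_\eta$ is an excellent threefold over a one-dimensional base, and then runs the threefold MMP (in the mixed/imperfect setting of Bhatt et al.\ and Das--Waldron) following Gongyo--Nakamura--Tanaka to produce a $\mathbb{Q}$-factorial plt model $(Y,S+\Delta_Y)$ with $S=\lfloor\Delta_Y\rfloor$ a del Pezzo type surface over a field of smaller transcendence degree. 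The induction hypothesis gives Kawamata--Viehweg vanishing for $g|_S$, and a Bernasconi--Koll\'ar type argument (restriction to $S$ via a short exact sequence, using that $-S$ is nef) propagates this to $R^ig_*\mathcal{O}_Y(D_Y)=0$. The point is that one never leaves the world of normal klt/plt pairs; the imperfection of the base field is absorbed into the geometry of the total space $\mathcal{X}$ over a perfect field.
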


In 1981, Elkik proved that Kawamata log-terminal (klt) singularities are rational in characteristic zero (\cite{MR621766}). The main technical ingredient in Elkik’s proof is the Kawamata--Viehweg vanishing theorem. Since vanishing theorems fail in general in positive characteristic, it is natural to ask whether the implication `klt implies rational' continues to hold in characteristic \( p > 0 \). The implication fails in every positive characteristic in high enough dimensions (see \cite{totaro2024terminal3foldscohenmacaulay}). However, if we fix the dimension, the implication is expected to hold after a high enough prime.

In \cite{hacon_witaszek_rationality}, the authors show that there exists a natural number \( p_0 \) such that any three-dimensional klt singularity defined over a perfect field of characteristic \( p > p_0 \) is rational. The number \( p_0 \) is chosen large enough so that the Kawamata--Viehweg vanishing theorem holds for surfaces of del Pezzo type; that is, \( p_0 = p(2) \) as described above.

Using our main theorem, we deduce that klt singularities on excellent threefolds are rational.

\begin{corollary}(cf. \cite[Theorem 1.1]{hacon_witaszek_rationality}, \cite[Theorem 3.1]{bernasconi_kollar_vanishing}) \label{C1}
Let \( (X, \Delta) \) be a three-dimensional excellent dlt pair that admits thrifty resolution where \( X \) is an excellent scheme with a dualizing complex whose closed points have residue fields of characteristics \( p > 5 \).  Then \((X,B)\) is a rational pair (see Definition \ref{DR}) for every \(B \subseteq \lfloor \Delta \rfloor\).

In particular, three-dimensional klt singularities under the same setup are rational and hence, Cohen-Macaulay.

 Moreover, if \( X \) is \( \mathbb{Q} \)-factorial and \( D \) is a Weil divisor on \( X \), then \( \mathcal{O}_X(D) \) is Cohen--Macaulay.
\end{corollary}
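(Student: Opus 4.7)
The plan is to import the argument of Hacon--Witaszek \cite{hacon_witaszek_rationality} and the Grauert--Riemenschneider framework of Bernasconi--Koll\'ar \cite{bernasconi_kollar_vanishing} essentially verbatim: the only step in either proof where perfectness of the base/residue field is used is an invocation of the Kawamata--Viehweg vanishing theorem for surfaces of del Pezzo type, and Theorem~\ref{T1} has been formulated precisely to remove that hypothesis. The corollary therefore reduces to tracing those two arguments and substituting Theorem~\ref{T1} at the single surface-level vanishing input.

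First I would establish a Grauert--Riemenschneider-type vanishing for a thrifty resolution $f : Y \to X$ of the three-dimensional dlt pair $(X, \Delta)$. Following \cite{bernasconi_kollar_vanishing}, an inductive argument via a relative dlt minimal model program on $Y/X$ reduces the required vanishing to verifying it on each contraction in the program, and ultimately to the vanishing of sheaves of the form $R^i h_* \mathcal{O}_{F}(-\lceil h^*\Theta \rceil)$ on two-dimensional fibers $F$ of contractions $h$. These fibers are surfaces of del Pezzo type over a residue field of $X$ at some (not necessarily closed) point; since $X$ is excellent and the closed-point residue fields have characteristic $p > 5$, every such residue field is of characteristic $p > 5$ and is possibly imperfect. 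This is exactly the setting of Theorem~\ref{T1}, so the inductive step goes through.

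Once Grauert--Riemenschneider-type vanishing for $(X, \Delta)$ is in hand, the rationality of $(X, B)$ for every $B \subseteq \lfloor \Delta \rfloor$ follows by the dualizing-complex pushdown of \cite[\S 3]{hacon_witaszek_rationality}, which uses only formal properties available for excellent schemes admitting a dualizing complex. Specialising to $\Delta = B = 0$ recovers the rationality of three-dimensional klt singularities, and the implication ``rational $\Rightarrow$ Cohen--Macaulay'' in this setting is standard. For the final assertion, an argument using the $\mathbb{Q}$-factoriality and klt-ness of $X$ reduces the Cohen--Macaulayness of $\mathcal{O}_X(D)$ to the same Grauert--Riemenschneider-type vanishing applied after pulling $D$ back by a log resolution, along the lines of the corresponding reduction in \cite{hacon_witaszek_rationality}.

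The main obstacle I anticipate is not the vanishing itself but verifying that the supporting MMP machinery --- existence of dlt modifications, flips, termination of flips, and contraction theorems for three-dimensional excellent dlt pairs with possibly imperfect residue fields --- is available in the required generality to permit the reduction to the surface case. Once that framework is in place (and since the hypotheses already grant a thrifty resolution), Theorem~\ref{T1} is the only genuinely new input and the remainder of the proof is bookkeeping.
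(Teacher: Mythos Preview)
Your proposal is correct and follows essentially the same route as the paper: establish Grauert--Riemenschneider vanishing for a thrifty resolution by running the MMP-based argument of \cite{bernasconi_kollar_vanishing} and substituting Theorem~\ref{T1} at the del Pezzo surface step (this is exactly Corollary~\ref{C2}), then deduce rationality of $(X,B)$ via the standard discrepancy computation as in \cite[Theorem~2.87]{kollar_singularities_2013}. Your identification of Theorem~\ref{T1} as the single new input and your concern about the supporting MMP machinery are both accurate; the paper handles the latter by invoking \cite{bhatt2022globallyregularvarietiesminimal} and \cite{das_waldron_imperfect}.
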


In Corollary \ref{C1} the assumption on the characteristics is optimal, as shown by
the examples in \cite{totaro2024terminal3foldscohenmacaulay}.

The idea is to consider a thrifty log resolution (For the definition of a thrifty resolution, see \cite[Definition 2.79]{kollar_singularities_2013}) of the pair \( (X, \Delta) \) and to show that it is a rational resolution. 
Thrifty resolutions are known to be rational in characteristic zero (see \cite[Corollary 2.86]{kollar_singularities_2013}). However, in the positive characteristic, this is not known, and the question is closely related to the failure of the Grauert--Riemenschneider vanishing theorem in characteristic \( p \) (see Definition \ref{D4}).

We show that Grauert--Riemenschneider vanishing holds for dlt threefolds as a consequence of our main theorem, and therefore Corollary~\ref{C1} follows. More details are provided in Section~\ref{S5}.

\begin{corollary}(cf. \cite[Theorem 1.3]{bernasconi_kollar_vanishing} \label{C2}
    Let $(X, \Delta)$ be a three-dimensional dlt pair, where $X$ is an excellent
scheme with a dualizing complex whose closed points have residue fields
of characteristic $p > 5$. Then G-R vanishing holds over $(X, \Delta)$.
\end{corollary}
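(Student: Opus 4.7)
The approach follows \cite{bernasconi_kollar_vanishing}, with Theorem \ref{T1} replacing its perfect-field analogue. Because G-R vanishing is local on \(X\), the first reduction is to assume \(X = \Spec R\) is the spectrum of an excellent local ring with residue field \(k(x)\) of characteristic \(p > 5\). I would then choose a thrifty log resolution \(\pi : Y \to X\) of \((X, \Delta)\); the goal is to show \(R^i \pi_* \omega_Y(\lceil \Delta_Y \rceil) = 0\) for \(i > 0\), where \(\Delta_Y\) is the appropriate log transform of \(\Delta\). Since \(\dim Y = 3\) and the fibers of \(\pi\) have dimension at most \(2\), \(R^i \pi_* = 0\) for \(i \geq 3\) by Grothendieck vanishing.

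For the remaining degrees \(i = 1, 2\), the plan is to run a relative \((K_Y + \Delta_Y)\)-MMP over \(X\), using the three-dimensional MMP now available for excellent schemes with residue characteristic \(p > 5\). The exceptional divisors surviving the MMP appear as components of the fiber over the closed point \(x\); by adjunction and the negativity lemma, each such surface component \(S\), equipped with its induced adjunction boundary \(\Delta_S\), is of del Pezzo type over the (possibly imperfect) residue field \(k(x)\). At this point Theorem \ref{T1} applies directly to \((S, \Delta_S)\): combining it with Serre duality on \(S\) gives the vanishing of the relevant cohomology groups of \(\omega_S\) twisted by the restriction of \(\lceil \Delta_Y \rceil\) to \(S\), which is precisely what is needed for the contribution of \(S\).

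To assemble the component-wise results into the desired global statement, I would use short exact sequences expressing \(\omega\) of the full (simple normal crossings) exceptional fiber in terms of its irreducible strata, together with the theorem on formal functions to pass from infinitesimal vanishings to the vanishing of \(R^i \pi_* \omega_Y(\lceil \Delta_Y \rceil)\) itself. The principal obstacle is the middle step: verifying that the surface components produced by the MMP over an imperfect residue field are of del Pezzo type in a sense to which Theorem \ref{T1} genuinely applies. This is exactly the issue that prevents a direct transcription of the perfect-residue-field argument of \cite{bernasconi_kollar_vanishing} to the excellent setting, and it is the reason Theorem \ref{T1}, rather than its perfect-field counterpart, is indispensable here.
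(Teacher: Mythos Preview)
Your outline has the right ingredients but assembles them in the wrong order, and this creates a genuine gap. In the paper's (and Bernasconi--Koll\'ar's) argument, the MMP is used to \emph{factor} the resolution $g:Y\to X$ into a sequence of elementary steps $Y=Y_0\dashrightarrow Y_1\dashrightarrow\cdots\to X$, and one proves inductively that G-R vanishing for $Y_i\to X$ implies G-R vanishing for $Y_{i+1}\to X$. The del Pezzo surfaces appear precisely as the divisor $S_i$ contracted at a divisorial-contraction step $Y_i\to Y_{i+1}$ when $S_i$ maps to a point: the extremal-ray condition forces $-(K_{Y_i}+\Theta_i)|_{S_i}$ to be ample, hence $S_i$ is of del Pezzo type over the (possibly imperfect) residue field of that point, and Theorem \ref{T1} supplies the missing $R^1$ vanishing on $S_i$. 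Vanishing then propagates from step to step via Leray.

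Your proposal instead speaks of ``exceptional divisors surviving the MMP'' and of applying Theorem \ref{T1} to surface components of the \emph{original} snc exceptional fiber, assembling the answers with formal functions and strata exact sequences. Neither of these works: once the MMP over a dlt $X$ terminates, there are no surviving exceptional divisors to which Theorem \ref{T1} could be applied; and the components of the original exceptional fiber of a log resolution are \emph{not} of del Pezzo type in general (think of a chain of blow-ups), so Theorem \ref{T1} does not apply to them. The negativity lemma and adjunction do not change this. The MMP is what manufactures the del Pezzo situation, one contraction at a time, and the argument must track vanishing through each such step rather than attempt a direct sheaf-theoretic decomposition of the original fiber.

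A smaller but real point: your stated goal, $R^i\pi_*\omega_Y(\lceil\Delta_Y\rceil)=0$, is only one instance of the G-R vanishing in Definition \ref{D4}, which requires the vanishing for every $D'\sim_{g,\mathbb{R}}K_{X'}+\Delta'$ with the listed constraints on $\Delta'$; the inductive MMP argument is phrased so as to carry this full generality along.
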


The proof is the same as the proof of \cite[Theorem 3]{bernasconi_kollar_vanishing}. We replace the result of \cite{arvidson_vanishing} with Theorem \ref{T1}.

\begin{remark}
After completing this work, we learned that Jefferson Baudin, Tatsuro Kawakami, and Linus R\"{o}sler independently established that three-dimensional Cohen-Macaulay klt singularities satisfy Grauert--Riemenschneider vanishing, using different methods (see \cite{baudin2025grauertriemenschneidervanishingcohenmacaulayschemes}).
\end{remark}

As a direct consequence of Corollary \ref{C2}, we have the following. 

\begin{corollary}(cf. \cite[Theorem 5.1]{bernasconi_kollar_vanishing}) \label{C3}
Let $k$ be a field of characteristic $p> 5$.
Let $f \colon X \rightarrow Z$ be a projective contraction morphism between quasi-projective normal varieties over $k$.
Suppose that there exists an effective $\mathbb{Q}$-divisor $\Delta \geq 0$ such that
\begin{enumerate}
\item $(X, \Delta)$ is a klt threefold pair;
\item  $-(K_X+\Delta)$ is $f$-big and $f$-nef;
\item $\dim(Z) \geq 1$.
\end{enumerate} 
The natural map $\mathcal{O}_Z \rightarrow \mathbf{R}f_*\mathcal{O}_X $ is then an isomorphism.
\end{corollary}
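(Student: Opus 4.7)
The plan is to adapt the proof of \cite[Theorem 5.1]{bernasconi_kollar_vanishing}, substituting our Theorem~\ref{T1} for the perfect-field Kawamata--Viehweg vanishing used there and Corollary~\ref{C2} for the required G-R vanishing on dlt threefolds. Since $f$ is a projective contraction to the normal variety $Z$, we have $f_*\mcO_X = \mcO_Z$, so it suffices to show that $R^i f_* \mcO_X = 0$ for every $i > 0$. The question is local on $Z$.

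I would separate the argument according to $\dim Z$. When $\dim Z = 3$ the morphism $f$ is birational, and the vanishing follows at once from Corollary~\ref{C2} together with the rationality of klt threefold singularities provided by Corollary~\ref{C1}. For $\dim Z \in \{1,2\}$, let $\eta$ denote the generic point of $Z$. Flat base change and adjunction show that $(X_\eta, \Delta|_{X_\eta})$ is klt with $-(K_{X_\eta}+\Delta|_{X_\eta})$ big and nef, so $X_\eta$ is a del Pezzo type variety over the (possibly imperfect) residue field $k(\eta)$. When $\dim Z = 1$, $X_\eta$ is a surface and Theorem~\ref{T1} applied with $D = 0$ yields $H^i(X_\eta, \mcO_{X_\eta}) = 0$ for $i > 0$; when $\dim Z = 2$, the generic fibre is a rational curve and the analogous vanishing is immediate. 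Flat base change then gives $(R^i f_* \mcO_X)_\eta = 0$, so each $R^i f_* \mcO_X$ is supported on a proper closed subset of $Z$.

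To promote this generic-point vanishing to vanishing on all of $Z$, I would proceed by induction on $\dim Z$ via general hyperplane sections. Pick a sufficiently general very ample divisor $H$ on the quasi-projective base $Z$, set $X' := f^{-1}(H)$, and write $f' := f|_{X'} \colon X' \to H$. Bertini-type arguments ensure that $(X', \Delta|_{X'})$ is klt with $-(K_{X'}+\Delta|_{X'})$ big and nef over $H$, so by the inductive hypothesis the natural map $\mcO_H \to \mathbf{R}f'_*\mcO_{X'}$ is an isomorphism. Applying $\mathbf{R}f_*$ to the short exact sequence
\[
0 \to \mcO_X(-f^*H) \to \mcO_X \to \mcO_{X'} \to 0,
\]
and using the projection formula together with $f_*\mcO_X = \mcO_Z$, a diagram chase yields isomorphisms $R^i f_* \mcO_X \otimes \mcO_Z(-H) \cong R^i f_* \mcO_X$ for all $i \geq 1$. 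Iterating these isomorphisms and combining with Serre vanishing for sufficiently positive twists then forces $R^i f_* \mcO_X = 0$, exactly as in the reference.

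The main obstacle is controlling the cohomology along the special fibres of $f$, which may fail to be of del Pezzo type and along which the boundary $\Delta$ can degenerate, so that Theorem~\ref{T1} does not apply to them directly. Corollary~\ref{C2} is the essential input for overcoming this: it supplies the vanishing needed along the bad locus that appears in the hyperplane-section induction and justifies any reduction to a dlt modification required to place us in a setting where the cohomological behaviour is sufficiently well controlled.
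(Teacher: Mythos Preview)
Your reduction to a zero-dimensional bad locus is fine in spirit, but the final step---``iterating these isomorphisms and combining with Serre vanishing''---does not close. The map $R^if_*\mcO_X\otimes\mcO_Z(-H)\to R^if_*\mcO_X$ coming from the long exact sequence is multiplication by the section cutting out $H$. If $R^if_*\mcO_X$ is a skyscraper supported at finitely many closed points and your general $H$ avoids them, this map is an isomorphism for trivial reasons and yields nothing; if instead you force $H$ through a bad point $v$, then Nakayama would indeed kill the stalk, but now you must know $R^if'_*\mcO_{X'}=0$ for $X'=f^{-1}(H)$, and when $\dim Z=1$ this $X'$ is precisely the degenerate fibre over $v$, for which no del Pezzo or klt hypothesis is available. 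So the induction does not bottom out. (There is also a secondary issue: Bertini for klt pairs along $f^*|H|$ over a possibly imperfect field, especially through prescribed points, needs justification.)

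Your last paragraph gestures at Corollary~\ref{C2}, but G--R vanishing there is a statement about higher direct images along a log resolution $Y\to X$; it says nothing directly about $R^if_*\mcO_X$ for the Fano contraction $f\colon X\to Z$. The paper fills exactly this gap by a different mechanism: after reducing (via \cite{tanaka_vanishing_2017} and \cite{bernasconi_tanaka}) to finitely many closed points $v\in Z$, it applies Lemma~\ref{l2} to produce a $\mathbb{Q}$-factorial plt threefold $(Y,\Delta_Y)$ over $Z$ with $g^{-1}(v)_{\mathrm{red}}=S:=\lfloor\Delta_Y\rfloor$ a surface of del Pezzo type, and then invokes Lemma~\ref{l5} (i.e.\ \cite[Proposition~2.3]{bernasconi_kollar_vanishing}) together with Theorem~\ref{T1} on $S$ to obtain $R^ig_*\mcO_Y=0$ near $v$. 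That plt modification is the missing idea in your proposal; without it, neither the hyperplane trick nor Corollary~\ref{C2} reaches the bad points.
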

\begin{remark}
    
The study of singularities plays a central role in birational geometry. Broadly speaking, there are two major approaches to studying singularities. One approach is to analyze their local properties; for instance, in Corollary \ref{C1}, we show that three-dimensional klt singularities are rational. 

Another perspective is to consider a proper variety and study the behavior of its singularities globally. In particular, given a variety \( X \), it is natural to examine the locus where \( X \) fails to be klt. This locus is denoted by \( \mathrm{Nklt}(X, B) \).

In \cite{Filipazzi_2024}, the authors prove that the Shokurov--Kollár connectedness principle holds for threefolds in both positive and mixed characteristic, under the assumption that all residue fields have characteristic \( p > 5 \). They also characterize the cases in which \( \mathrm{Nklt}(X, B) \) fails to be connected.

Moreover, as noted in \cite[Remark 1.3]{Filipazzi_2024}, \cite[Theorem 1.2]{Filipazzi_2024} continues to hold in the setting of imperfect base fields since we have the required vanishing theorems over the imperfect fields [Corollary \ref{C2}].
\end{remark}

\begin{proof}[Sketch of the proof of Theorem \ref{T1}]\label{P1}
We bring the entire setup to a smaller field \( E \), as in Lemma \ref{l1}. We observe that \( X_E \) is the generic fiber of a morphism \( \phi \colon \mathcal{X} \to \mathcal{V} \), where $\mathcal{X} \text{ and } \mathcal{V}$ are varieties over perfect field and the function field of \( \mathcal{V} \) is \( E \). We proceed by induction on the dimension of \( \mathcal{V} \). The base case \( \dim \mathcal{V} = 0 \) follows from \cite{arvidson_vanishing}.

Assume that Kawamata-Viehweg vanishing holds over the generic fiber of \( \phi \) when \( \dim \mathcal{V} = n - 1 \). We aim to show the vanishing when \( \dim \mathcal{V} = n \). The idea is to localize at the generic point \( \eta \) of a general hyperplane section of \( \mathcal{V} \), and prove that
\[
R^i \phi_{\eta *} \mathcal{O}_{\mathcal{X}_\eta}(D_\eta) = 0.
\]

We make a birational transformation of \( \mathcal{X}_\eta \) to obtain a pair \( ({Y}, \Delta_{{Y}}) \), where \( {Y} \) is a \( \mathbb{Q} \)-factorial plt threefold and \( S = \lfloor \Delta_Y \rfloor \) is a surface of Fano type.
We show that it is sufficient to prove 
\[
R^i g_* \mathcal{O}_{{Y}}(D_{{Y}}) = 0.
\] where $g: {Y} \rightarrow \mathcal{V}_{\eta}$ is the birational transform. 
We apply the induction hypothesis to \( S \) to show that Kawamata--Viehweg vanishing holds for \( g|_S \), and then use \cite[Proposition 7]{bernasconi_kollar_vanishing} to deduce the vanishing
\[
R^i g_* \mathcal{O}_{{Y}}(D_{{Y}}) = 0
\]
in a neighborhood of \( g(S) \).

\end{proof}

\textbf{Acknowledgement.}
 The author is immensely grateful to Joe Waldron, her advisor, for suggesting this problem and for his unwavering support, insightful advice, fruitful discussions, and infinite patience. This work would be incomplete without his guidance. We would
like to thank Fabio Bernasconi and Karl Schwede for reading this manuscript and providing many
valuable feedback and helpful comments.
This work was supported by NSF Grant DMS \#2401279 and by the Simons Foundation (award \#850684, JW).

 \section{Preliminaries}
\subsection{Definitions}
In this article, we work over an arbitrary field \( k \) (possibly imperfect) of characteristic \( p > 0 \). We say that \( X \) is a variety over \( k \), or a \( k \)\textit{-variety}, if \( X \) is an integral scheme that is separated and of finite type over \( k \). A surface is a \( k \)-variety of dimension \( 2 \).

\begin{definition}\label{D1}
   We call a pair \( (X, \Delta) \) a \textit{log pair} if \( X \) is a normal \( k \)-variety and \( \Delta \) is a boundary divisor; that is, \( \Delta \) is an effective \( \mathbb{Q} \)-divisor with coefficients in the interval \( [0,1] \), such that \( K_X + \Delta \) is \( \mathbb{Q} \)-Cartier.

\end{definition}
We refer \cite{kollar_birational_1998} for basic definitions in birational geometry and singularities. We say that a variety is \textit{ $\mathbb{Q}$-factorial} if every Weil divisor is $\mathbb{Q}$-Cartier.
We recall that klt surface pairs are $\mathbb{Q}$-factorial.
\begin{definition}\label{D2}
Let $k$ be a field. We say that normal $k$-projective surface $X$  is a  \textit{surface of del Pezzo type} if there exists a boundary divisor $\Delta$ such that the following holds:
\begin{enumerate}
    \item $(X, \Delta) $ is a klt pair
    \item $-(K_X+ \Delta)$ is ample
\end{enumerate}
The pair $(X, \Delta)$ is called a \textit{log del Pezzo pair}.
\end{definition}
We recall the definition of rational singularities for pairs, following \cite[Definition 2.80]{kollar_singularities_1996}.

\begin{definition}[Rational Pair] \label{DR}
    Let $X$ be a normal variety and $D \subset X$ be a reduced divisor.
Let $\phi : Y \rightarrow X$ be a resolution such that
$(Y,D_Y := \phi^{-1}_* (D))$ is snc. Then
$\phi : (Y, D_Y) \rightarrow (X, D)$ is called a \textit{rational resolution} if
\begin{enumerate}
    \item $\mathcal{O}_X (-D) \sim \phi_* \mathcal{O}_Y (-D_Y)$
    \item $R^i \phi_* \mathcal{O}_Y (-D_Y) =0$ for $i>0$
    \item $R^i \phi_* \omega_Y (D_Y) =0$ for $i>0.$
\end{enumerate}
A pair $(X,D)$ is called \textit{rational} if it has a thrifty rational resolution.
\end{definition}

In the proof of Corollary \ref{C1}, we show that for every reduced divisor $ B \subset \lfloor {\Delta} \rfloor$, the pair $(X, B)$ is a rational pair. We show that every thrifty log resolution of $(X, B)$ is rational, and the main tool of the proof is G-R vanishing, which is defined as follows:

\begin{definition}\label{D4}
Let $(X, \Delta)$ be a pair,
where $X$ is a normal, excellent
scheme with a dualizing complex and $\Delta$ is a boundary on $X$. 
We assume that the log resolutions of singularities exist for the pairs in G-R vanishing-type statements.

We say that (strong) \textit{G-R vanishing} holds over 
$(X, \Delta)$ if the following is satisfied
for every log resolution $g : (X', E + g^{-1}_* \Delta) \longrightarrow (X, \Delta)$,
where $E$ is the exceptional
divisor of $g$.

Let $D'$ be a Weil $\mathbb{Z}$-divisor and $\Delta'$ an effective $\mathbb{R}$-divisor on $X'$. 
Assume
that $g_* \Delta' \leq \Delta, \lfloor \text{Ex} ( \Delta')\rfloor = 0$ (where Ex$(\Delta'
)$ denotes the $g$-exceptional
part of $\Delta'$
), and
\begin{enumerate}
    \item (for G-R Vanishing) $D' \sim_{g,\mathbb{R}} K_{X'} + \Delta'$,
    \item (for strong G-R Vanishing) $D' \sim_{g,\mathbb{R}} K_{X'} + \Delta' + (g$-nef $\mathbb{R}$-divisor).
\end{enumerate}
Then $R^i g_* \mathcal{O}_{X'} (D') = 0$ for $i > 0$.
\end{definition}
\subsection{Related Results}
In this section, we summarize the results already known for del Pezzo-type surfaces, which we will need later. We begin by stating a theorem of \cite[Theorem 1.1]{arvidson_vanishing}, which will be generalized in this work.

\begin{theorem}[{\cite[ Theorem 1.1]{arvidson_vanishing}}]\label{ABL19}
	Let $X$ be a surface of del Pezzo type over a perfect field $k$ of characteristic $p>5$.
	Let $D$ be a Weil divisor on $X$ and suppose that there exists an effective $\mathbb{Q}$-divisor $\Delta$ such that $(X, \Delta )$ is a klt pair and $D-(K_X+\Delta)$ is big and nef. 
	Then 
	$$H^i(X, \mathcal{O}_X(D))=0 \text{ for all } i>0. $$
    
\end{theorem}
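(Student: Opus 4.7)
My plan is to prove the theorem via F-singularity theory: show that over a perfect field of characteristic $p > 5$, every surface of del Pezzo type is \emph{globally F-regular}, and then invoke the Schwede--Smith theorem that global F-regularity implies Kawamata--Viehweg vanishing for any Weil divisor $D$ with $D - (K_X + \Delta)$ big and nef. One small reduction at the start: by Kodaira's lemma I can write $D - (K_X + \Delta) \sim_{\mathbb{Q}} A + E$ with $A$ ample and $E$ effective, replace $\Delta$ by $\Delta + \epsilon E$ to preserve klt while reducing to $D - (K_X + \Delta)$ ample, which is all Schwede--Smith needs.

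For the global F-regularity step, I would reduce to a Picard-rank-one model via the minimal model program. Since $X$ is of del Pezzo type, there exists a boundary $\Delta_0$ making $-(K_X + \Delta_0)$ ample; running a $(K_{\tilde X} + \tilde \Delta_0)$-MMP on the minimal resolution $\pi \colon \tilde X \to X$ produces a Mori fiber space $X' \to Z$ with $Z$ either a point or a smooth curve. Global F-regularity is well-behaved under birational maps from F-regular ambient spaces, so it is enough to verify it for the Mori fiber space endpoint. The conic bundle case ($Z$ a curve) reduces to global F-regularity of $\mathbb{P}^1$ together with a relative F-regularity statement along the general fiber.

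The crux is the Picard-rank-one endpoint, where one must directly verify global F-regularity of a log del Pezzo surface of Picard rank one. The main obstacle is that this statement is sharp: in characteristics $p \in \{2,3,5\}$ there exist explicit log del Pezzo surfaces of Picard rank one that fail even to be F-split, and correspondingly Kawamata--Viehweg vanishing fails on them. So the heart of the argument must be a careful classification-based case analysis, bounding the possible singularities and dual-graph configurations of log del Pezzo surfaces of Picard rank one in terms of the characteristic, and showing that the only potentially bad configurations are excluded precisely when $p > 5$. I expect most of the work to live here, with the F-regularity verifications performed by a combination of explicit computation on singular models and lifting/descent arguments between the surface and its minimal resolution.
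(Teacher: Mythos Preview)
The paper does not give its own proof of this statement. Theorem~\ref{ABL19} is quoted from \cite{arvidson_vanishing} in the ``Related Results'' subsection and is then used as a black box---specifically as the base case \(\dim \mathcal{V} = 0\) of the induction in the proof of Theorem~\ref{T1}. There is therefore nothing in the present paper against which to compare your proposal.

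On the proposal itself: the overall shape---run an MMP to reduce to a Picard-rank-one log del Pezzo and then carry out a classification-based case analysis---does match the architecture of the original argument in \cite{arvidson_vanishing}; the paper even remarks that the perfect-field hypothesis enters precisely through Lacini's classification \cite{lacini_dP} at the rank-one step. However, your bridge from ``\(X\) is globally F-regular'' to the desired vanishing via Schwede--Smith is not correct as stated. Smith's theorem gives \(H^i(X, L) = 0\) for all \(i > 0\) and all \emph{nef} invertible sheaves \(L\) on a globally F-regular \(X\); this is a Kodaira-type statement, not the Kawamata--Viehweg statement for a Weil divisor \(D\) with \(D - (K_X + \Delta)\) big and nef relative to an \emph{arbitrary} klt boundary \(\Delta\). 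To obtain the latter from F-singularity input one needs global F-regularity of the \emph{pair} \((X, \Delta)\), and global F-regularity of \(X\) alone does not force this for every klt \(\Delta\) (already on \(X = \mathbb{P}^1\), any klt \(\Delta\) with \(\deg(K_X + \Delta) > 0\) gives a non--log-Fano, hence non--globally-F-regular, pair). Some additional maneuver---e.g.\ replacing \(\Delta\) by a small-coefficient boundary for which openness of global F-regularity applies while keeping \(D - (K_X + \Delta)\) ample---would be needed, and your outline does not supply it.
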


The assumption of a perfect field is necessary for this result because it uses Lacini's Ph.D. thesis \cite{lacini_dP} to deal with the case
of klt del Pezzo surfaces of Picard rank one. When generalizing to an arbitrary field, we require the Minimal Model Program for arithmetic threefolds whose residue characteristics $p>5$.
\begin{theorem}\cite[Theorem G]{bhatt2022globallyregularvarietiesminimal}
  Let $(X,B)$ be a three-dimensional
$\mathbb{Q}$-factorial dlt pair with $\mathbb{R}$- boundary, which is projective over $T$. Assume that the image of $X$ in $T$ is
of positive dimension and that $T$ has no residue fields of characteristic $2, 3$ or $5.$
If $K_X + B$ is pseudo-effective, then we can run a $(K_X + B)-$ MMP and any sequence of steps
of this MMP terminates with a log minimal model.
If $K_X + B$ is not pseudo-effective, then we can run a $(K_X + B)-$ MMP with scaling over $T$
which terminates with a Mori fiber space.
\end{theorem}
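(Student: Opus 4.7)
The plan is to establish this MMP statement via the standard cone--contract--flip--terminate cycle, adapted to the arithmetic three-fold setting with all relevant residue characteristics $p>5$. The hypothesis that the image of $X$ in $T$ is positive-dimensional is used throughout to guarantee that geometric fibers of $X\to T$ contain positive-dimensional varieties over fields of characteristic $p>5$, so that characteristic-$p$ geometric MMP tools (bend-and-break, $F$-singularity methods, surface MMP results) become available at every step.

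First I would prove a relative cone theorem for $X/T$: the closure $\overline{\mathrm{NE}}(X/T)$ decomposes in the standard way, and the $(K_X+B)$-negative part is spanned by countably many extremal rays of bounded length. Combined with a base-point-free theorem for $\mathbb{Q}$-factorial dlt three-folds over $T$, each such extremal ray yields a projective contraction $\varphi\colon X\to Y$ over $T$, falling into three types: divisorial (which drops the relative Picard rank and preserves $\mathbb{Q}$-factorial dlt), small flipping, or Mori fiber type (only possible when $K_X+B$ is not pseudo-effective). Cone and base-point-freeness in this setting need cohomological inputs valid in mixed characteristic, together with bend-and-break arguments producing rational curves on negative extremal rays.

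The main obstacle is existence of flips. For a flipping contraction $\varphi$, the flip is constructed as $X^+ = \Proj_Y \bigoplus_{m\geq 0} \varphi_*\mathcal{O}_X(m(K_X+B))$, so existence reduces to finite generation of this relative section algebra. By standard perturbation and pl-flip reduction one restricts to the pl-flip case, where adjunction transfers the problem to the plt divisor $S=\lfloor B\rfloor$, which is a surface of log Fano type over the base. The critical input is the theory of globally $+$-regular (equivalently, quasi-$F$-regular) singularities of Bhatt et al., which, under the residue characteristic bound $p>5$, supplies the Kodaira--Kawamata--Viehweg-type vanishing needed to lift sections from $S$ to $X$ and to deduce finite generation. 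The exclusion of $p=2,3,5$ is precisely the range where these vanishings, and the requisite klt-type behaviour of dlt surfaces, can fail.

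Finally, I would deduce termination by the special termination method: after finitely many flips the flipping loci become disjoint from any fixed codimension-one stratum, so termination of a sequence of flips on $X$ reduces to termination of the induced birational maps on the strata of $\lfloor B\rfloor$, which are dlt pairs of dimension at most two where termination is classical. Combined with the length-of-extremal-ray bound and the monotonic decrease of the scaling parameter, this produces a log minimal model in the pseudo-effective case, and a Mori fiber space when $K_X+B$ is not pseudo-effective, completing the MMP.
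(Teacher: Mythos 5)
This statement is not proved in the paper at all: it is quoted verbatim as an external input, namely Theorem G of the cited work of Bhatt--Ma--Patakfalvi--Schwede--Tucker--Waldron--Witaszek on the minimal model program for arithmetic threefolds, so there is no internal proof to compare your sketch against. Judged on its own, your proposal is a reasonable high-level roadmap that does mirror the broad strategy of that literature (cone and contraction theorems over an excellent base, reduction of flip existence to pl-flips and adjunction to a log Fano surface, vanishing via global $+$-regularity under the residue characteristic bound $p>5$, special termination), but it is not a proof. Each of your bullet steps --- the relative cone theorem and length bounds over a mixed-characteristic base, the base-point-free theorem, existence of pl-flips, and termination with scaling --- is itself a substantial theorem occupying most of the cited paper, and your critical input (``the theory of globally $+$-regular singularities of Bhatt et al.'') is precisely the machinery whose development constitutes the proof of the theorem you are asked to establish; invoking it wholesale makes the argument circular as a replacement for the citation.

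Two specific gaps are worth naming. First, your termination argument via special termination only reduces a sequence of flips to lower-dimensional strata when the flipping loci meet $\lfloor B\rfloor$; when $\lfloor B\rfloor=0$ (the klt case, which the dlt statement must cover) special termination says nothing, and termination of an \emph{arbitrary} sequence of steps in the pseudo-effective case, as asserted in the statement, requires a separate argument (reduction to a terminal model and a difficulty-type or scaling argument adapted to the excellent, possibly mixed-characteristic base). Second, your appeal to bend-and-break to produce rational curves on extremal rays is problematic over imperfect residue fields and over a mixed-characteristic base; the actual proofs obtain the cone theorem and length bounds by different means (Keel-type base point freeness, the surface MMP over excellent bases, and lifting from fibers), and this substitution cannot be waved through. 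As written, the proposal is a plausible outline of the known proof rather than an argument that could stand in for the cited Theorem G.
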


\section{ Lemmas and Propositions}

 In this section, we prove several lemmas and propositions needed for the proof of the main theorem and its corollaries. Some of these results are known in more restricted settings, and we generalize them to the context required for our arguments. \\
 As outlined earlier, we reduce the entire setup to a smaller field, and the following lemma facilitates this reduction.

\begin{lemma}\label{l1}
Let $X$ be a surface of del Pezzo type over a field $k$ of characteristic $p>5$ such that $(X,\Delta)$ is a klt pair. Let $D$ be a Weil divisor on $X$ so that $D-(K_X +\Delta)$ is big and nef. 

Then there exists a finitely generated extension $E$ of $\mathbb{F}_p $ such that $(X_E,\Delta_E)$ and $D_E$ satisfy the same setup where $(X, \Delta)= (X_E, \Delta_{E}) \otimes_E k$ and $D_E$ is the push forward of $D$ under $ i: X \longrightarrow X_E$.
\end{lemma}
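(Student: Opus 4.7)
The plan is to apply standard spreading-out, viewing $k$ as the filtered union $k = \bigcup_\alpha E_\alpha$ of its finitely generated subfields over $\mathbb{F}_p$. Limit results of EGA IV $\S 8$ allow me to descend schemes of finite type, morphisms, finitely many divisors, and various numerical and geometric properties to some such $E_\alpha$, and the strategy is to do this carefully so that every hypothesis of the setup is preserved after base change back up to $k$.

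First, since $X$ is of finite type over $k$ and $\Delta, D$ have finitely many prime components with rational coefficients, some $E_0 \subseteq k$ and a model $(X_{E_0},\Delta_{E_0},D_{E_0})$ exist with $(X_{E_0},\Delta_{E_0},D_{E_0})\otimes_{E_0} k \cong (X,\Delta,D)$. Enlarging $E_0$ to some $E_1$, I arrange that $X_{E_1}$ is normal, integral, and projective over $E_1$, and that $K_{X_{E_1}}+\Delta_{E_1}$ is $\mathbb{Q}$-Cartier; these are conditions that descend to a finitely generated extension. Fixing an $m$ such that $-m(K_X+\Delta)$ is a very ample Cartier divisor embedding $X$ in some $\mathbb{P}^N_k$, the defining equations and the line bundle descend to a finitely generated $E_2$, so $-(K_{X_{E_2}}+\Delta_{E_2})$ is ample. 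Bigness and nefness of $D-(K_X+\Delta)$ are preserved under faithfully flat base change and descend by standard limit arguments (nefness can be checked after descending the finitely many curves arising as components of a witnessing cycle; bigness via top self-intersection), so after a further enlargement to $E_3$, $D_{E_3}-(K_{X_{E_3}}+\Delta_{E_3})$ is big and nef.

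The delicate step is the klt property. I fix a log resolution $\pi:Y \to X$ over $k$, which exists for surfaces in arbitrary characteristic and over any (possibly imperfect) field, with discrepancy formula $K_Y + \pi^{-1}_*\Delta = \pi^*(K_X+\Delta) + \sum_i a_i E_i$ where the finitely many rational numbers $a_i$ indexing the prime exceptional divisors satisfy $a_i > -1$. Since $\pi$, $Y$, and the supports $\pi^{-1}_*\Delta$ and $E_i$ are all of finite type, I descend all of this data to some finitely generated $E \supseteq E_3$. By flat base change the discrepancy equation persists and the coefficients $a_i$ are unchanged, so $(X_E,\Delta_E)$ is klt. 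Combining the steps yields the required descended data, and by construction $(X,\Delta,D)$ is recovered by the base change $-\otimes_E k$.

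The main obstacle is the klt condition, since a priori it must be checked on every log resolution. This is resolved by the observation that klt can be witnessed by any single log resolution, and such a resolution spreads out by standard limit methods; once the numerical discrepancies are preserved under base change, klt descends automatically. The positivity, projectivity, and $\mathbb{Q}$-Cartier conditions follow from standard limit arguments in EGA IV $\S 8$.
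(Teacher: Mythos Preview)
Your overall strategy---spreading out the finitely many pieces of data (schemes, divisors, a log resolution, an ample embedding) to a finitely generated subfield of $k$---is exactly the approach the paper takes, and most of your steps are fine.

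There is, however, a genuine gap. You write ``Fixing an $m$ such that $-m(K_X+\Delta)$ is a very ample Cartier divisor embedding $X$ in some $\mathbb{P}^N_k$ \ldots\ so $-(K_{X_{E_2}}+\Delta_{E_2})$ is ample.'' But the hypotheses do \emph{not} say that $-(K_X+\Delta)$ is ample. The statement only says that $X$ is a surface of del Pezzo type, which by definition means there exists some (generally different) boundary $\Delta_1$ with $(X,\Delta_1)$ klt and $-(K_X+\Delta_1)$ ample; the divisor $\Delta$ in the lemma is a second, independent boundary for which only $D-(K_X+\Delta)$ big and nef is assumed. So to descend ``del Pezzo type'' you must also descend $\Delta_1$: enlarge $E$ so that the very ample embedding given by $-m(K_X+\Delta_1)$ and a log resolution witnessing that $(X,\Delta_1)$ is klt both descend. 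The paper does exactly this, treating $\Delta$ and $\Delta_1$ separately. The fix is routine once noticed, but as written your argument does not establish that $X_E$ is of del Pezzo type.

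A minor point: your justification for descending nefness (``descending the finitely many curves arising as components of a witnessing cycle'') is not how nefness works---there is no finite witnessing set. The correct reason is that for a field extension $E\subset k$ the map $X\to X_E$ is faithfully flat and intersection numbers are preserved, so $L_E\cdot C_E = L\cdot (C_E\times_E k)\ge 0$ for every curve $C_E$ on $X_E$. The paper instead bundles big and nef together by choosing an effective $N$ with $D-(K_X+\Delta)-\tfrac{1}{m}N$ ample for $m\gg 0$ and descending $N$ along with the ample embeddings, which is also a clean way to handle both properties at once.
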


\begin{proof}
Let $E$ be a finite transcendental extension of $\mathbb{F}_p $ which contains the coefficients of the equations of some quasi-projective embeddings of $X$, every
component of $D$, $K_X$, $\Delta$.
First, we intend to show that $X_E$ is a surface of del Pezzo type.
Given that $X$ is a surface of del Pezzo type, there exists an effective $\mathbb{Q}$-divisor $\Delta_1$ such that $(X,\Delta_1)$ is klt pair and $-(K_X+\Delta_1)$ is ample. 
Let $$\phi_{-(K_X+\Delta_1)}: X \rightarrow \mathbb{P}^m$$ be the morphism induced by a high enough multiple of $-(K_X+\Delta_1).$ We extend $E$ so that it contains the equation of embeddings of $X$ by $\phi_{-(K_X+\Delta_1)}$, $K_X$ and $\Delta_1$. 
Then this embedding descends to $E$, and we get, $$\phi_{-(K_{X_{E}}+\Delta_{1,E})}: X_E \rightarrow \mathbb{P}^m_E$$
This shows that $X_E$ is a normal $E$-projective surface with $-(K_{X_E} +\Delta_{1,E})$ an ample divisor. 
Note that $(X, \Delta_1)= (X_E, \Delta_{1,E}) \otimes_E k.$
Next, we intend to show that $(X_E, \Delta_{1,E})$ is a klt pair. 
Since $(X, \Delta_1)$ is a klt pair, there exists a log resolution 
$$f: Y \rightarrow X$$ such that if we write $$f^*(K_{X} +\Delta_1) = K_Y +C$$ then the coefficient $c_j$ of $C_j$ in $C$ lies in $(-\infty,1).$ We extend the field $E$ to contain the coefficients of the equation of log resolution $f$ and equations of $C_j's$.
This assures the existence of a log resolution such that the required condition on the coefficient of the exceptional divisor holds. And we conclude that $(X_E, \Delta_{1,E})$ is a klt pair and therefore a surface of del Pezzo type over $E$.

By the same argument as above, we can say that $(X_E, \Delta_E)$ is a klt pair. Next, we show that the divisor $D_E-(K_{X_E} +\Delta_E)$ is nef and big. It is enough to produce an effective divisor $N_E$ such that $D_E-(K_{X_E} +\Delta_E)+\frac{1}{m} N_E$ is ample for $m \gg0.$ But $D-(K_{X} +\Delta)$ is nef and big, so there exists an effective divisor $N$ such that $D-(K_{X} +\Delta)+\frac{1}{m} N$ is ample for $m \gg 0.$ We can extend $E$, so that it contains the equations of $N$.  And we conclude that $D_E-(K_{X_E} +\Delta_E)$ is nef and big. Note that all the extensions of $E$ in the proofs are finite. So $E$ is a finitely generated extension of $\mathbb{F}_p.$ This completes the proof.

\end{proof}

Next, we present a series of results that appear in less general form in the literature; we also adapt some of them to suit the requirements of the Main Theorem. We start by generalizing \cite[Proposition 2.15]{gongyo_rational_2015-1} in the imperfect field setting.

\begin{lemma}\label{l2}(cf. \cite[Proposition 2.15]{gongyo_rational_2015-1})
    Let $k$ be an F-finite field of char $p>5.$ Let $\phi:X \rightarrow V$ be a projective morphism of normal quasi-projective varieties over $k$ with the following properties:
    \begin{itemize}
        \item $(X, \Delta)$ is a klt threefold.
        \item $0< \dim V \leq 3$
        \item $\phi_* \mathcal{O}_X = \mathcal{O}_V$ and $-(K_X +\Delta)$ is $\phi$-nef and $\phi$-big. 
    \end{itemize}
    Fix a closed point $v \in V$. Then there exists a commutative diagram of quasi-projective normal varieties.  
 \begin{center} 
\begin{tikzcd}
W \arrow[d, "\varphi"'] \arrow[r, "\psi"] & Y \arrow[d, "g"] \\
X \arrow[r, "\phi"']                      & V               
\end{tikzcd}
\end{center}
and an effective divisor $\mathbb{Q}$-divisor $\Delta_Y$ on $Y$ which satisfy the following properties:
\begin{enumerate}
    \item $(Y, \Delta_Y)$ is a $\mathbb{Q}$-factorial plt threefold and $-(K_Y + \Delta_Y)$ is $g$-ample.
    \item $-\left \lfloor{\Delta_Y}\right \rfloor $ is $g$-nef and $g^{-1}(v)_{\text{red}} = \left \lfloor{\Delta_Y}\right \rfloor  $
    \item $W$ is a regular $3$-fold and both $\varphi$ and $\psi$ are projective birational morphism.
\end{enumerate}
In particular, $\left \lfloor{\Delta_Y}\right \rfloor $ is a surface of del Pezzo type.
\end{lemma}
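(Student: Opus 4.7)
The proof constructs a $\mathbb{Q}$-factorial plt modification of $(X, \Delta)$ over $V$ extracting divisors lying over $v$. This is a standard construction in the MMP literature (cf.\ \cite[Proposition 2.15]{gongyo_rational_2015-1}) which I would generalize to the F-finite setting using the three-dimensional MMP now available in residue characteristic $p > 5$.

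First, invoking the relative Kodaira lemma (applicable since $-(K_X+\Delta)$ is $\phi$-big and $\phi$-nef), I would slightly perturb $\Delta$ so that $-(K_X+\Delta)$ becomes $\phi$-ample while $(X,\Delta)$ remains klt; this reduction simplifies the MMP analysis. Next, I would take a log resolution $\varphi : W \to X$ of $(X, \Delta)$ arranged so that the reduced preimage $F := (\phi \circ \varphi)^{-1}(v)_{\red}$, together with $\varphi^{-1}_*\Delta + \Ex(\varphi)$, forms a simple normal crossing divisor. Log resolutions of excellent threefolds in residue characteristic $p > 5$ are provided by Cossart--Piltant, which covers our F-finite case.

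Now I would assemble a boundary $\Gamma_W$ on $W$ by giving coefficient $1$ to each component of $F$ and small positive coefficients $1-\epsilon_j$ to the remaining $\varphi$-exceptional prime divisors $G_j$, yielding $(W, \Gamma_W)$ dlt with $\lfloor \Gamma_W \rfloor = F$; the klt hypothesis on $(X,\Delta)$ makes this choice possible. A discrepancy computation gives
$$K_W + \Gamma_W = \varphi^*(K_X+\Delta) + M,$$
where $M$ is an effective $\varphi$-exceptional $\mathbb{Q}$-divisor supported on $F \cup \bigcup_j G_j$. Running an appropriate MMP over $V$ with scaling, designed to contract the non-$F$ exceptional divisors and which terminates by Theorem G of \cite{bhatt2022globallyregularvarietiesminimal}, I would obtain the desired plt model $\psi : W \dashrightarrow Y$ and $g : Y \to V$ with $\Delta_Y := \psi_*\Gamma_W$ and $\lfloor \Delta_Y \rfloor = \psi_*F = g^{-1}(v)_{\red}$. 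The remaining conclusions are checked as follows: the dlt structure of $(W, \Gamma_W)$ descends to make $(Y, \Delta_Y)$ plt (once the $G_j$ are contracted, the floor components are pairwise disjoint); $\mathbb{Q}$-factoriality is preserved throughout the MMP; the $g$-ampleness of $-(K_Y+\Delta_Y)$ follows from the $\phi$-ampleness of $-(K_X+\Delta)$ via the negativity lemma applied to the $\varphi$-exceptional $M$; $-\lfloor \Delta_Y \rfloor$ is $g$-nef as a consequence of coinciding with the full reduced fiber of $g$ over $v$ combined with the MMP output conditions; and $\lfloor \Delta_Y \rfloor$ is a surface of del Pezzo type by plt adjunction, producing a klt pair $(S, \mathrm{Diff}_S)$ with $-(K_S + \mathrm{Diff}_S)$ ample.

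The principal obstacle is running the MMP so that the output has the exact form demanded: ensuring exactly the $G_j$ are contracted while each component of $F$ survives in the floor of $\Delta_Y$, and that the resulting $-(K_Y+\Delta_Y)$ is $g$-ample rather than merely $g$-nef. This requires fine tuning of the coefficients $\epsilon_j$ and essential use of the $\phi$-ampleness of $-(K_X+\Delta)$; an alternative phrasing is to realize the construction as a dlt modification of a log canonical pair of the form $(X, \Delta + c\cdot\phi^{-1}\mathfrak{m}_v)$ for $c$ the appropriate log canonical threshold.
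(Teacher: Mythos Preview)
Your main construction has a genuine gap. You assign coefficient $1$ to \emph{every} component of $F=(\phi\circ\varphi)^{-1}(v)_{\red}$ and design the MMP to contract only the non-$F$ exceptional divisors $G_j$. But then $\lfloor\Delta_Y\rfloor=\psi_*F$ will in general have several components, and since $g^{-1}(v)$ is connected (Zariski's main theorem, using $g_*\mathcal{O}_Y=\mathcal{O}_V$), these components must meet. Hence $(Y,\Delta_Y)$ is only dlt, not plt, and $\lfloor\Delta_Y\rfloor$ is not an irreducible normal surface, so it cannot be ``a surface of del Pezzo type'' as the conclusion demands. Your parenthetical claim that ``once the $G_j$ are contracted, the floor components are pairwise disjoint'' is unjustified and generally false.

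The paper (following \cite{gongyo_rational_2015-1}) does essentially what you list as an ``alternative phrasing'': it pulls back a Cartier divisor through $v$, writes $h^*\phi^*D_V=M+F$ on a resolution $h:Z\to X$, and then chooses a rational number $\lambda$ (the relevant threshold) so that in
\[
K_Z+h_*^{-1}\Delta+\bigl(\epsilon A-(1-\epsilon)h^*(K_X+\Delta)+\lambda M\bigr)\sim_{\mathbb{Q},V}\sum(a_i-\epsilon g_i-\lambda f_i)E_i
\]
exactly one coefficient hits $-1$; the corresponding prime divisor $S_Z$ is the \emph{only} component kept in the floor. All other $E_i$ (including the remaining fiber components) are contracted, via a specific sequence of \emph{three} MMPs: first over $X$ to kill the exceptional $E_i$ not over $v$, then over $V$, then a $(K+\tfrac{1}{2}S)$-MMP over $V$ to force $-S$ to be $g$-nef and $-(K_Y+\Delta_Y)$ to be $g$-ample. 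Your single ``appropriate MMP over $V$'' does not deliver either of these last two properties; the $-S$ $g$-nef conclusion in particular is not an automatic consequence of $S=g^{-1}(v)_{\red}$ and requires the third run. The passage to imperfect fields is then handled by replacing the MMP inputs of \cite{gongyo_rational_2015-1} with their analogues in \cite{das_waldron_imperfect}.
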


    \begin{proof}
      The proof works on the same lines as \cite[Proposition 2.15]{gongyo_rational_2015-1}. We highlight the central part of the proof that is useful to us. 
      The idea is to consider the log resolution $h : Z \longrightarrow X$ of $X$ and choose a Cartier divisor $D_V$ on $V$ such that $v \in \text{Supp } D_V$ and, $$0 \sim_V h^* \phi^* D_V =M+F$$ where $M$ is base point free and $F= \sum f_i E_i$ is an effective divisor whose support is the same as the fiber of $v$ under the map $Z \longrightarrow V.$ Then consider $$
K_Z+h_*^{-1}\Delta=h^*(K_X+\Delta)+\sum a_iE_i
$$ and write $-h^*(K_X+\Delta) =A+G$ where $A$ is an ample $\mathbb{Q}$-divisor over $V$ 
and $G$ is an effective $h$-exceptional $\mathbb{Q}$-divisor. Let $G=\sum_{i}g_iE_i$ and choose $\epsilon$ a small positive rational number such that $a_i-\epsilon g_i >-1$ for any $i$. The authors then proceed to write 
\[
K_Z+h_*^{-1}\Delta+\epsilon A-(1-\epsilon)h^*(K_X+\Delta)=\sum (a_i-\epsilon g_i) E_i.
\]
Since $M+F \sim_V 0$, they find $\lambda \in \mathbb{Q}_{>0}$ with
\[
K_Z+h_*^{-1}\Delta+ \big( \epsilon A- (1-\epsilon) h^*(K_X+\Delta)+\lambda M \big) \sim_{\mathbb{Q}, V} \sum (a_i-\epsilon g_i-\lambda f_i)E_i,
\]
such that $a_i-\epsilon g_i-\lambda f_i \geq -1$ holds for any $i$ and at least one index attains $-1$. 
They let $S_Z$ to be a prime divisor which attains $-1$ and write $b_i :=(a_i-\epsilon g_i-\lambda f_i)+1$. Clearly, $b_i \geq 0$. It is important to note that $S_Z$ and $\sum_{i \in I'} b_iE_i$ may not be $h$-exceptional, but $\Supp \sum_{i \in I'} b_iE_i$ contains all the $h$-exceptional prime divisors 
whose images in $X$ are not contained in $X_v$ (In particular, it contains all the exceptional divisors whose centers lie in the generic fiber of $\phi :X \longrightarrow V$, see Remark \ref{R1}). Finally, they run MMP three times: 
\begin{itemize}
    \item $(K_Z+B_Z+S_Z+\sum_{i \in I'} E_i)$-MMP over $X$ where $B_Z := h_*^{-1}\Delta+ \big( \epsilon A- (1-\epsilon) h^*(K_X+\Delta)+\lambda M \big)$ (replace $\big( \epsilon A- (1-\epsilon) h^*(K_X+\Delta)+\lambda M \big)$ by an ample $\mathbb{Q}$-divisor $A'$ if needed).
    \item $(K_{Z'}+B_{Z'}+S_{Z'}+\sum _{i \in I'} E_i')$-MMP over $V$
    \item $(K_{Z''}+B_{Z''}+ \frac{1}{2}S_{Z''}+\sum _{i \in I'} E_i'')$-MMP over $V$ again.
\end{itemize}
We replace Lemma 2.2, Lemma 2.6, Lemma 2.9, and Lemma 2.10 by Theorem 1.1, Theorem 6.6,  Theorem 1.4, and Theorem 1.3, respectively, in \cite{das_waldron_imperfect} to shift from perfect fields to an imperfect field setting. It is noted that the end result 
$(Y, B_{Y} + \frac{1}{2}S+\sum _{i \in I'} E_{i, Y})$ is a minimal model equipped with $g:Y \to V$ and all the prime divisors in $\sum _{i \in I'} E_i$ (which does not contain $S_Z$) are contracted in this MMP. 
Refer to \cite[Proposition 2.15]{gongyo_rational_2015-1} for more details.
    \end{proof}

\begin{remark}\label{R1}
 In the proof of Theorem \ref{T1}, we will need to show that when we restrict $\psi \circ \varphi^{-1}$ to the generic fiber $X_E$ of $\phi: \mathcal{X} \longrightarrow \mathcal{V}$, it turns out to be a morphism.  For this we only need to ensure that the exceptional divisors of the resolution of singularities whose centers lie in $X_E$ are contracted under the MMP-map. We see in the proof above that the curves contracted by the first MMP are contained in
$$\bigcup _{i \in I'} \Supp E_i$$ since it is also $(\sum_{i \in I'} b_iE_i)$-MMP. The MMP terminates with a minimal model 
$(Z', B_{Z'} + S_{Z'}+\sum_{i \in I'} E_i')$ over $X$. It is observed that  $E_i'=0$ if $E_i$ is contracted and $\Supp(\sum b_iE_i') \subset Z'_v$. Since $v$ is a closed point in $V$, we see that all the exceptional divisors whose centers lie in the generic fiber have been contracted.
 
\end{remark}
We observe that a three-dimensional plt pair, under suitable hypotheses, is purely globally $+$-regular (see \cite[Corollary~7.15]{bhatt2022globallyregularvarietiesminimal}). For the scheme $Y$ constructed in Lemma~\ref{l2}, we show that certain special divisorial sheaves are Cohen–Macaulay.

\begin{lemma}\label{l3}(cf. \cite[Theorem 2]{kollar_local_2011}, \cite[Theorem 3.1]{patakfalvi_depth_2014})
        Suppose that $(R,m)$ is local and that $(X= \operatorname{Spec} R, \Delta)$
is $+$-regular $\mathbb{Q} -$factorial pair. Furthermore, suppose that $0 \leq \Delta' \leq \Delta $ is such that $r D \sim r \Delta'$
for some integral divisor $D$ and some integer $r>0$. Then $\mathcal{O}_X (-D)$ is Cohen-Macaulay. 

    \end{lemma}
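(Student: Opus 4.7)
The plan is to mimic the cyclic-cover argument used in the strongly $F$-regular setting (see \cite[Theorem~2]{kollar_local_2011}, \cite[Theorem~3.1]{patakfalvi_depth_2014}) and transport it to the $+$-regular setting, where the role played by $F$-splittings in characteristic $p$ is taken by the splinter-type splittings afforded by $+$-regularity in mixed characteristic. Since $rD \sim r\Delta'$, choose $f \in K(X)^\times$ with $\div(f) = r\Delta' - rD$, and form the degree-$r$ cyclic cover
\[
\pi \colon Y := \Spec_X \bigoplus_{i=0}^{r-1} \mathcal{O}_X\bigl(-iD + \lfloor i\Delta' \rfloor\bigr) \cdot t^i \longrightarrow X,
\]
with algebra structure given by $t^r = f$. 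Because $(X, \Delta)$ is $+$-regular, it is in particular klt, so the coefficients of $\Delta'$ lie in $[0,1)$. In particular $\lfloor \Delta' \rfloor = 0$, so the $i = 1$ eigensheaf of $\pi_*\mathcal{O}_Y$ under the natural $\mu_r$-action is exactly $\mathcal{O}_X(-D)$, exhibiting $\mathcal{O}_X(-D)$ as a direct summand of $\pi_*\mathcal{O}_Y$.

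Next I would verify that the cover inherits $+$-regularity with an appropriate boundary. By a Riemann--Hurwitz type computation one obtains $K_Y + \Delta_Y \sim_{\mathbb{Q}} \pi^*(K_X + \Delta)$ for an effective $\mathbb{Q}$-divisor $\Delta_Y$ on $Y$ encoding $\pi^*(\Delta - \Delta')$ together with the ramification contributions (which occur over $\Supp(\Delta')$). Because the index-one cover associated to the fractional part of the boundary is precisely the type of finite cover along which $+$-regularity is preserved --- the mixed-characteristic analogue of the well-known behavior of strong $F$-regularity under such covers --- the pair $(Y, \Delta_Y)$ is again $+$-regular. Since local $+$-regular pairs are Cohen-Macaulay as part of the $+$-regularity package, $Y$ itself is Cohen-Macaulay.

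Finite pushforward commutes with local cohomology, so $\pi_*\mathcal{O}_Y$ is a Cohen-Macaulay $\mathcal{O}_X$-module. The direct summand property then forces $H^i_{\mathfrak{m}}(\mathcal{O}_X(-D))$ to be a summand of $H^i_{\mathfrak{m}}(\pi_*\mathcal{O}_Y) = 0$ for $i < \dim X$, so $\mathcal{O}_X(-D)$ is Cohen-Macaulay. The principal obstacle is the middle step: rigorously transferring $+$-regularity along the cyclic cover with the correct boundary, which requires invoking (or establishing) the cover-compatibility of $+$-regularity together with the Riemann--Hurwitz formula in this setting. Once this cover-compatibility is in hand, the rest of the argument is formal.
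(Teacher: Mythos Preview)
Your strategy is the natural transplant of the Koll\'ar--Patakfalvi argument, and you have correctly isolated its weak point: the step ``$(Y,\Delta_Y)$ is again $+$-regular'' is not a formality, and a general cover-compatibility statement for $+$-regularity of pairs under index-one or cyclic covers is not something you can simply quote. Until that step is secured, the argument is incomplete.

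The paper sidesteps this obstacle entirely by a different, more direct route. Rather than building a finite cyclic cover and trying to prove it is $+$-regular, the paper passes all the way to the absolute integral closure. After completing, $+$-regularity gives a splitting of $\mathcal{O}_{\widehat{X}} \to f_*\mathcal{O}_{\widehat{X}^+}(\Delta')$; twisting by $-D$ and reflexifying (using that $\mathcal{O}_X(-D)$ is reflexive of rank one) yields a split injection
\[
\mathcal{O}_{\widehat{X}}(-D) \hookrightarrow f_*\mathcal{O}_{\widehat{X}^+}(\Delta'-D),
\]
hence an injection on local cohomology. The point is that on $X^+$ the divisor $\Delta'-D$ becomes \emph{principal}: the very cyclic cover $R[T]/(T^r-f)$ you wrote down already sits inside $R^+$, so $\Delta'\sim D$ there. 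Thus the target local cohomology is just $H^i_{\mathfrak m}(\widehat{R}^+)$, which vanishes for $i<\dim X$ by Bhatt's Cohen--Macaulayness theorem for $R^+$. Faithful flatness of completion finishes.

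In short: both arguments ultimately rest on Bhatt's theorem, but the paper applies it once, directly to $R^+$, whereas your route would need it twice over --- first to get $+$-regular $\Rightarrow$ CM for the cover, and before that a nontrivial descent/ascent statement for $+$-regularity along $\pi$. The paper's approach is both shorter and avoids the unproven ingredient.
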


\begin{proof}
By \cite[Proposition 3.5.4]{bruns_cohen-macaulay_1993}, it suffices to show that  $
H^i_{\mathfrak{m}}(X, \mathcal{O}_X(D)) = 0$ for all  $i < \dim X.$ Consider the integral closure of the completion of \((R, \mathfrak{m})\),
\[
R \xrightarrow{\phi} \widehat{R} \xrightarrow{\varphi} \widehat{R}^+.
\]

This induces a morphism of the affine schemes $$\operatorname{Spec} \widehat{R}^+ \xrightarrow{f} \operatorname{Spec} \widehat{R} \xrightarrow{g} \operatorname{Spec} {R}$$ and therefore a map on the structure sheaves
\[
F: \mathcal{O}_{\widehat{X}} \longrightarrow f_*\mathcal{O}_{\widehat{X}^+} \text{  and  } G: \mathcal{O}_X \longrightarrow g_*\mathcal{O}_{\widehat{X}}.
\]

Since $\widehat{R}$ is $+$-regular, the composition map \(\varphi: \widehat{R} \to \widehat{R}^+\) is pure, being a filtered colimit of split maps. Since \(\widehat{R}\) is complete and local, the map \(\widehat{R} \to \widehat{R}^+\) splits. Similarly, we see that \[
 \mathcal{O}_{\widehat{X}} \longrightarrow f_*\mathcal{O}_{\widehat{X}^+}(\Delta')
\] 
splits.

Twisting by \(-D\), reflexifying, we get:
 \[ \mathcal{O}_{\widehat{X}}(-D)\longrightarrow f_*\mathcal{O}_{\widehat{X}^+}(\Delta'-D)
\] 
The above map splits because $-D$ is reflexive of rank one.
We take the local cohomology to obtain a map
\[
H^i_{\mathfrak{m}}(\widehat{X}, \mathcal{O}_{\widehat{X}}(-D)) \longrightarrow H^i_{\mathfrak{m}}(\widehat{X}^+, \mathcal{O}_{\widehat{X}^+}(\Delta'-D)).
\]
This map is injective because \( \mathcal{O}_{\widehat{X}}(-D)\longrightarrow f_*\mathcal{O}_{\widehat{X}^+}(\Delta'-D)
\) splits. 
Since $r \Delta' \sim r D$, there exists a finite cover of $X$ where  $ \Delta' \sim  D.$ Indeed, let $ r(\Delta' -D)= \div f$, then $X_1 =\operatorname{Spec} \frac{R[T]}{(T^r -f)}$ is the finite cover where  $ \Delta' \sim  D$. We conclude that  $ \Delta' \sim  D$ on the absolute integral closure $X^+ =\operatorname{Spec} R^+$ and the cohomology on the right hand side coincide with that of $R^+$. By Bhatt vanishing theorem \cite[Theorem 5.1]{bhatt_cohen}, the right-hand side vanishes for all \(i < \dim X\), and thus
\[
H^i_{\mathfrak{m}}(\widehat{X}, \mathcal{O}_{\widehat{X}}(-D)) = 0.
\]

Next, note that the completion map \(g: R \to \widehat{R}\) is faithfully flat. Consequently, the induced morphism on structure sheaves,
\[
G: \mathcal{O}_X \longrightarrow g_*\mathcal{O}_{\widehat{X}},
\]
is faithfully flat as well. Twisting \(G\) by \(\mathcal{O}_X(-D)\), and noting that \(\mathcal{O}_X(-D)\) is reflexive of rank one, we obtain a faithfully flat morphism
\[
\mathcal{O}_X(-D) \longrightarrow g_*\mathcal{O}_{\widehat{X}}(g^*(-D)),
\]
which induces an isomorphism on local cohomology. Thus,
\[
H^i_{\mathfrak{m}}(X, \mathcal{O}_X(-D)) = 0.
\]

\end{proof}
We use Lemma~\ref{l3} to establish the short exact sequence from \cite[Proposition~3.1]{hacon_witaszek_rationality} in the mixed characteristic setting.

\begin{lemma}\label{l4} (cf. \cite[Proposition 3.1]{hacon_witaszek_rationality})
Let $(Y, S + \Delta_Y)$ be a three-dimensional $\mathbb{Q}$-factorial plt pair, where $Y = \operatorname{Spec} R$ for an excellent ring $R$ admitting a dualizing complex, and such that the residue fields of closed points of $R$ have characteristic $p > 5$. Then for any Weil divisor $D_Y$ and every integer $m \geq 1$, there exists an exact sequence
\[
0 \longrightarrow \mathcal{O}_Y(D_Y - (m+1)S) \longrightarrow \mathcal{O}_Y(D_Y - mS) \longrightarrow \mathcal{O}_S(G_m) \longrightarrow 0,
\]
where 
\[
G_m \sim_{\mathbb{R}} -m S|_S + D_Y|_S - \Delta_m
\]
for some divisor $0 \leq \Delta_m \leq \operatorname{Diff}_{\bar{S}}(\Delta_Y').$

\end{lemma}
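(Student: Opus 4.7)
The plan is to adapt the argument of \cite[Proposition~3.1]{hacon_witaszek_rationality} to the excellent mixed-characteristic setting, replacing its characteristic-$p$ perfect-field inputs by the $+$-regularity results of \cite{bhatt2022globallyregularvarietiesminimal} together with our Lemma~\ref{l3}. The statement is local on $Y$, so I would first replace $Y$ by the spectrum of a suitable strictly Henselian (or complete) local ring. Under the given hypotheses, the three-dimensional $\mathbb{Q}$-factorial plt pair $(Y, S+\Delta_Y)$ is purely globally $+$-regular by \cite[Corollary~7.15]{bhatt2022globallyregularvarietiesminimal}; this is exactly the input required to invoke Lemma~\ref{l3}.

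Next, I would write the natural inclusion of rank-one reflexive sheaves
\[
0 \longrightarrow \mathcal{O}_Y(D_Y - (m+1)S) \longrightarrow \mathcal{O}_Y(D_Y - mS) \longrightarrow Q \longrightarrow 0,
\]
which is automatically left-exact, with cokernel $Q$ set-theoretically supported on $S$. Using Lemma~\ref{l3} to ensure that the relevant twists $\mathcal{O}_Y(D_Y - mS)$ are Cohen--Macaulay, one concludes that $Q$ has no embedded primes and is a rank-one reflexive sheaf on the normal surface $S$ (normality of $S$ being a consequence of the plt hypothesis). Hence $Q \cong \mathcal{O}_S(G_m)$ for a Weil divisor $G_m$ on $S$, determined up to linear equivalence.

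To identify $G_m$ up to $\mathbb{R}$-linear equivalence, I would perform an adjunction computation. Applying the plt-adjunction formula $(K_Y + S + \Delta_Y')|_S = K_S + \operatorname{Diff}_{\bar{S}}(\Delta_Y')$ and restricting $D_Y - mS$ to $S$, one finds that the restriction decomposes into an integral part, which recovers $G_m$, and a fractional part $\Delta_m$ arising from the fact that $S$ is only $\mathbb{Q}$-Cartier. The latter is supported in, and bounded by, the different $\operatorname{Diff}_{\bar{S}}(\Delta_Y')$. This gives the claimed formula $G_m \sim_{\mathbb{R}} -mS|_S + D_Y|_S - \Delta_m$ with $0 \leq \Delta_m \leq \operatorname{Diff}_{\bar{S}}(\Delta_Y')$.

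The \textbf{main obstacle} I anticipate is verifying the Cohen--Macaulay hypothesis for $\mathcal{O}_Y(D_Y - mS)$ when $D_Y$ is arbitrary, because Lemma~\ref{l3} requires a specific divisorial relation of the form $rD \sim r\Delta'$ with $0 \leq \Delta' \leq S + \Delta_Y$. To handle an arbitrary Weil divisor $D_Y$, I expect one must either pass to the index-one cover of $K_Y + S + \Delta_Y$ (on which $S$ becomes Cartier, the quotient sequence becomes the standard Cartier restriction, and descent is straightforward) or locally rewrite $D_Y$ modulo principal divisors so that the hypothesis of Lemma~\ref{l3} is satisfied. Either route should be routine once the $+$-regularity and Cohen--Macaulay input is in hand.
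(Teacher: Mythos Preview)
Your outline matches the paper's proof: localise, invoke \cite[Corollary~7.15]{bhatt2022globallyregularvarietiesminimal} to obtain $+$-regularity and normality of $S$, use Lemma~\ref{l3} to make the rank-one reflexive sheaves Cohen--Macaulay (hence $S_3$), and then carry over verbatim the cokernel identification from \cite[Proposition~3.1]{hacon_witaszek_rationality}.

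The only place you over-think is your ``main obstacle''. After localising at a closed point, $Y$ is the spectrum of a \emph{local} $\mathbb{Q}$-factorial ring, so any Weil divisor $D$ has some multiple $rD$ which is Cartier and therefore principal; hence $rD \sim r\cdot 0$, and Lemma~\ref{l3} applies with $\Delta'=0$. This shows at once that \emph{every} reflexive rank-one sheaf on $Y$ is Cohen--Macaulay, with no need for an index-one cover or any rewriting of $D_Y$. This is exactly what the paper does, and it is the special case of your second suggested route.
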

\begin{proof}
It is sufficient to prove the exactness of the sequence in a neighborhood of $S$. 
So we assume that $Y$ is spectrum of a local ring and therefore, globally $+$-regular and $S$ 
is normal by \cite[Corollary 7.15]{bhatt2022globallyregularvarietiesminimal}.  We consider the short exact sequence: $$0 \rightarrow \mathcal{O}_Y (K_Y+D_Y-(m+1)S) \rightarrow \mathcal{O}_Y (K_Y+D_Y-mS) \rightarrow i_* \varepsilon \rightarrow 0$$ where $\varepsilon$ is a sheaf supported on $S$ and $i: S \rightarrow X$ is the inclusion.
The sheaf $\mathcal{O}_Y (K_Y+D_Y-(m+1)S)$ and $\mathcal{O}_Y (K_Y+D_Y-mS)$ are reflexive of rank one, so it is $S_3$. 
Indeed, let $D$ be a reflexive sheaf of rank $1$. Since $X$ is $\mathbb{Q}$-factorial, there exists an $m \in \mathbb{N}$ such that $mD$ is Cartier. Since $X$ is local, we see that $mD$ is trivial. In particular, $-mD \sim m0$. We apply Lemma \ref{l3} to conclude $\mathcal{O}(D)$ is Cohen-Macaulay, in particular, it is $S_3$. Then, the proof of \cite[Proposition 3.1]{hacon_witaszek_rationality} carries over.
\end{proof}   

Using Lemmas~\ref{l3} and~\ref{l4}, we establish a special case of \cite[Proposition~2.3]{bernasconi_kollar_vanishing} that is suitable for our main proof.

\begin{lemma}\label{l5} \cite[Proposition 2.3]{bernasconi_kollar_vanishing}
Let \((Y, S + \Delta_Y)\) be a three-dimensional $\mathbb{Q}-$ factorial excellent plt pair, \(g \colon Y \to V\) a proper morphism, and \(D_Y\) a Weil \(\mathbb{Z}\)-divisor on \(Y\). Assume that:
\begin{enumerate}
    \item \(D_Y \sim_{g,\mathbb{R}} K_Y + S + \Delta' + L\) for some \(0 \leq \Delta' \leq \Delta_Y\),
    \item \(-S\) is \(g\)-nef,
    \item \(L\) is a $g$-nef and $g$-big and K-V vanishing holds for $g|_{\overline{S}}$.
\end{enumerate}
Then \(R^i g_* \mathcal{O}_Y(D) = 0\) for all \(i > 0\) near \(g(S)\).
\end{lemma}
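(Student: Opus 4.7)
The plan is to devissage along $S$ using the short exact sequence from Lemma~\ref{l4}, apply the K-V vanishing hypothesis on $g|_{\bar S}$ to each graded piece, and pass to the limit via the theorem on formal functions.

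First, for every $m \geq 0$, Lemma~\ref{l4} furnishes
\[
0 \longrightarrow \mathcal{O}_Y(D_Y - (m+1)S) \longrightarrow \mathcal{O}_Y(D_Y - mS) \longrightarrow \mathcal{O}_S(G_m) \longrightarrow 0,
\]
with $G_m \sim_{\mathbb{R}} -mS|_S + D_Y|_S - \Delta_m$ and $0 \leq \Delta_m \leq \operatorname{Diff}_S(\Delta')$. Because the plt hypothesis makes $S$ normal, adjunction together with hypothesis~(1) gives
\[
G_m \sim_{g,\mathbb{R}} K_S + \bigl(\operatorname{Diff}_S(\Delta') - \Delta_m\bigr) + \bigl(L|_S - mS|_S\bigr).
\]
By inversion of adjunction, $B_m := \operatorname{Diff}_S(\Delta') - \Delta_m$ is an effective boundary with $(S, B_m)$ klt; by hypothesis~(2), $-mS|_S$ is $g|_S$-nef; and after arranging a Kodaira decomposition $L \sim_{\mathbb{R}} A + E$ with $S \not\subset \operatorname{Supp} E$ and absorbing a small perturbation $\epsilon E$ into $B_m$, the divisor $L|_S - mS|_S$ becomes $g|_S$-nef and $g|_S$-big. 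The K-V vanishing hypothesis on $g|_{\bar S}$ therefore yields
\[
R^i(g|_S)_* \mathcal{O}_S(G_m) = 0 \qquad \text{for all } i > 0,\; m \geq 0.
\]

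Taking long exact sequences, this produces surjections $R^i g_* \mathcal{O}_Y(D_Y - (m+1)S) \twoheadrightarrow R^i g_* \mathcal{O}_Y(D_Y - mS)$ for every $i \geq 1$; and, by induction on $m$ using the filtration of $\mathcal{O}_Y(D_Y)/\mathcal{O}_Y(D_Y - mS)$ with graded pieces $\mathcal{O}_S(G_j)$ for $0 \leq j < m$, one obtains $R^i g_*\bigl(\mathcal{O}_Y(D_Y)/\mathcal{O}_Y(D_Y - mS)\bigr) = 0$ for all $i > 0$ and $m \geq 1$. I then invoke Grothendieck's theorem on formal functions along $g(S) \subset V$: the completion of $R^i g_* \mathcal{O}_Y(D_Y)$ along $g(S)$ is identified with the Mittag-Leffler inverse limit $\varprojlim_m R^i g_*\bigl(\mathcal{O}_Y(D_Y)/\mathcal{O}_Y(D_Y - mS)\bigr) = 0$; by faithful flatness of completion, $R^i g_* \mathcal{O}_Y(D_Y) = 0$ in a Zariski neighborhood of $g(S)$.

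The main obstacle I anticipate is the uniform K-V verification, i.e.\ proving $g|_S$-bigness of $L|_S - mS|_S$ for every $m$. Nefness is immediate from~(2) and~(3), but bigness of $L|_S$ is not automatic from $g$-bigness of $L$ on $Y$: one must carefully choose a Kodaira decomposition compatible with $S$ and perform a perturbation that simultaneously preserves plt-ness of $(Y, S + \Delta')$ and klt-ness of $(S, B_m)$ for all $m$. A secondary subtlety is the formal-functions identification itself in the excellent, imperfect-field setting, where one must confirm that $\{\mathcal{O}_Y(-mS)\}_m$ and the powers of the ideal sheaf of $g^{-1}(g(S))$ define cofinal pro-systems so that the limit above really computes the completion along $g(S)$.
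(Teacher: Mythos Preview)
Your proposal is correct and follows essentially the same route as the paper's proof, which simply cites \cite[Proposition~2.3]{bernasconi_kollar_vanishing} and notes that Lemma~\ref{l4} supplies the short exact sequence while the klt-ness of $(S,\operatorname{Diff}_S(\Delta))$ from subadjunction kills $R^i g_*\mathcal{O}_S(G_m)$. Your write-up is in fact more explicit than the paper's, and your flagged obstacles (bigness of $L|_S$ via a Kodaira decomposition avoiding $S$, and cofinality for the formal-functions argument) are exactly the technical points that the cited reference handles.
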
 
\begin{proof}
    The proof follows exactly as in \cite[Proposition~2.3]{bernasconi_kollar_vanishing}. We use Lemma~\ref{l4} to establish the exactness of the short exact sequence
\[
0 \longrightarrow \mathcal{O}_Y(D_Y - (m+1)S) 
\longrightarrow \mathcal{O}_Y(D_Y - mS) 
\longrightarrow \mathcal{O}_S(G_m) 
\longrightarrow 0,
\]
where
\[
G_m \sim_{\mathbb{R}} -m S|_S + D_Y|_S - \Delta_m.
\]
At the end of the proof, the term $R^i g_* v_* \mathcal{O}_{S}(G_m)$ vanishes since $(S, \operatorname{Diff}_S(\Delta))$ is a klt pair (by Subadjunction Formula). The remainder of the proof proceeds unchanged.

\end{proof}

\section{ Main Theorem}\label{S5}
    \begin{proof}[Proof of Theorem \ref{T1}]
 Let $(X, \Delta)$ be a klt pair over $k$ that satisfies the hypothesis of Theorem \ref{T1}.
 Let $Z$ be the non-SNC locus of $X$. 
 We consider $\pi : Y \rightarrow X$ a log resolution of $X$ so that every Weil divisor over $Z$ has positive log discrepancy. 
 Let $E$ be a finitely generated extension of $\mathbb{F}_p$ which contains the coefficients of equations of quasi-projective embeddings of $X$, components of $ D,\Delta, Z,\pi,K_X $ as in Lemma \ref{l1}. 
 Note that this is a finitely generated extension of $\mathbb{F}_p$. We see that $(X_E, \Delta_E)$ satisfy the same setup by Lemma \ref{l1}.
 It is sufficient to show the vanishing for the pair  $(X_E, \Delta_E)$. Indeed, in the diagram:\\
 \[\begin{tikzcd}
	X & {X_E} \\
	{\text{Spec } \space k} & {\text{Spec } \space E}
	\arrow[from=1-1, to=1-2]
	\arrow[from=1-1, to=2-1]
	\arrow[from=1-2, to=2-2]
	\arrow[from=2-1, to=2-2]
\end{tikzcd}\]\\
We have $H^i(X_E, D_E) \otimes_E k = H^i(X,D)$.
 We see that $X_E$ forms a generic fiber of $\phi: \mathcal{X} \rightarrow \mathcal{V}$ where $\mathcal{X}$ and $ \mathcal{V}$ are varieties over perfect fields and $\mathcal{V}$ has function field $E$. 
 So, we shall show the vanishing for the generic fiber of $\phi$.  The statement is local in the generic fiber of $\phi$, we are free to shrink $Z$ as required. We will assume that ${\phi}_* \mathcal{O_X}= \mathcal{O_V} $ by the Stein factorization of $\phi$. We proceed by induction on the dimension of $\mathcal{V}$.\\
 \underline{\textbf{Base Case}}: $\dim \mathcal{V} =0$ \\
 The result follows from \cite[Theorem 1.1]{arvidson_vanishing}.\\
 \underline{\textbf{Inductive hypothesis}}: $\dim \mathcal{V} \leq n-1$\\
 We assume that Kawamata-Viehweg vanishing holds over the generic fiber of $\phi$ when $\dim \mathcal{V} \leq n-1$.\\ 
 \underline{\textbf{Inductive step}}: $\dim \mathcal{V} =n$\\
We claim that the general fibres of $\phi$ are integral. By  \cite[\href{https://stacks.math.columbia.edu/tag/0553}{Section 0553}]{stacks-project} and \cite[\href{https://stacks.math.columbia.edu/tag/0574}{Section 0574}]{stacks-project}, it suffices to show that the generic fibre of $\phi$, $X_E$, is geometrically integral. But it follows from  \cite[Corollary~5.5]{bernasconi_tanaka}, since $X_E$ is a surface of del Pezzo type over $k$. 

 Let $H$ be a general hyperplane on $ \mathcal{V}$, we write $H'=\phi^* H$. We can
choose $H$ so that $H'$ is integral over the generic point of $H$. Furthermore,
if it is not integral everywhere, it must fail in some proper closed subset of $\mathcal{V}$, so we can
assume that F is integral by shrinking $\mathcal{V}$. Let $\eta$ be the generic point of $H$, and $\mathcal{X}_{\eta} =\mathcal{X} \otimes \mathcal{V}_{\eta}$. 
We see that $X_E $ forms the generic fiber of the morphism $\phi_{\eta}: \mathcal{X}_{\eta} \rightarrow \mathcal{V}_{\eta}$. 
So, it is sufficient to show $R^i \phi_{\eta *} \mathcal{O}_{\mathcal{X}_{\eta}} (\mathcal{D}_{\eta})=0$ instead.   
Note that $\mathcal{X}_{\eta}$ is of dimension $3$, projective over excellent base $\mathcal{V}_{\eta}$. 
We apply the Lemma \ref{l2} to the projective morphism $\phi_{\eta}: \mathcal{X}_{\eta} \rightarrow \mathcal{V}_{\eta} $ between normal quasi-projective varieties to get the commutative diagram as follows:\\
 \begin{center}
  \begin{tikzcd}
W \arrow[d, "\varphi"'] \arrow[r, "\psi"] & Y \arrow[d, "g"] \\
\mathcal{X}_{\eta} \arrow[r, "\phi_{\eta}"']                      & \mathcal{V}_{\eta}               
\end{tikzcd}
\end{center}
    where $\varphi$ and $\psi $ are the projective birational morphism, $Y$ comes from the construction in Lemma \ref{l2}, $(Y, \Delta_Y)$ is $\mathbb{Q}-$ factorial plt threefold, $-(K_Y +\Delta_Y) $ is $g$-ample, $-\lfloor \Delta_Y \rfloor$ is $g$-nef and $g^{-1}(v)_{\text{red}} =\lfloor \Delta_Y \rfloor$ for a closed point $v \in \mathcal{V}_{\eta}$. We write  $S=\lfloor \Delta_Y \rfloor$ and  claim that it is sufficient to prove \[
R^i g_* \mathcal{O}_Y(D_Y) = 0 \quad \text{for all } i > 0
\] near $g(S).$ Indeed, let $F := \psi \circ \varphi^{-1}$,  then the restriction of $F$ to $X_E$, say, $f := F|_{X_E}$, is a morphism (See Remark \ref{R1}).
Therefore, $f$ satisfies the relative Kawamata--Viehweg vanishing theorem for surfaces \cite[Theorem 1.3]{tanaka_minimal_2016-1}, that is,
\[
R^i f_* \mathcal{O}_{X_E}(D_E) = 0 \quad \text{for all } i > 0.
\]
This allows us to relate the cohomology of $X_E$ with that of $Y_E$, where $Y_E$ is the birational transform of $X_E$ under $F$. In other words,
\[
H^i (X_E, \mathcal{O}_{X_E}(D_E)) \cong H^i (Y_E, f_* \mathcal{O}_{X_E}(D_E)) \cong H^i (Y_E, \mathcal{O}_{Y_E}(f_* D_E)).
\]
Therefore, it is sufficient to show \[
R^i g_* \mathcal{O}_Y(D_Y) = 0 \quad \text{for all } i > 0
\] near $g(S).$
We intend to apply Lemma \ref{l5} to the excellent plt  pair $(Y, S+B)$ and the proper morphism $g: Y \longrightarrow \mathcal{V}_{\eta}$ to obtain the desired result. In the next lines, we show that the hypotheses of Lemma \ref{l5} are satisfied.  

We know that $D_E - (K_{X_E} + \Delta_E)$ (on $X_E$) is big, this implies $D_Y - (K_Y + \Delta_Y)$ is big. We reduce to the case $D_Y - (K_Y + \Delta_Y )$ is nef and big. 
Indeed, we can replace $Y$ with the output of a $D_Y - (K_Y + \Delta_Y)$-MMP over $\mathcal{V}_{\eta}$, since 
\[
\epsilon(D_Y-  (K_Y+\Delta_Y)) \;\sim_{\mathbb{R}}\; 
K_Y+\Delta_Y+\epsilon D_Y \;-\; (1+\epsilon)(K_Y+\Delta_Y),
\]
where $-(K_Y+\Delta_Y)$ is $g$-ample (see Lemma \ref{l2}), we choose $\epsilon $ small enough so that $\epsilon D_Y \;-\; (1+\epsilon)(K_Y+\Delta_Y)$ is $g$-ample. We choose an effective $g$-ample 
divisor $A' \sim \epsilon D_Y \;-\; (1+\epsilon)(K_Y+\Delta_Y)$ using \cite[Theorem 2.34]{bhatt2022globallyregularvarietiesminimal}. Then 
the pair 
\[
(Y, \Delta_Y+A')
\]
is plt and therefore, \[
(Y, \Delta_Y)
\] is plt by \cite[Corollary 2.35]{kollar_birational_1998}. 
Since $D_Y - (K_Y + \Delta_Y)$ is $g$-nef and $g$-big, we write \[
D_Y \sim_{g,\mathbb{Q}} K_Y + S + \Delta'_Y + L,
\]
where $L$ is $g$-nef and $g$-big. 

We see that all the hypotheses of Lemma \ref{l5} hold true. Indeed, $g|_S$ satisfies K-V vanishing by induction. 
Applying Lemma \ref{l5} to 
\[
g: Y \to \mathcal{V}_{\eta}
\]
yields
\[
R^i g_* \mathcal{O}_Y(D_Y) = 0 \quad \text{for all } i > 0
\]
near $g(S)$, which completes the proof, and hence the result follows.

\end{proof}

The next proof follows on the same lines as \cite[Theorem 3.1c]{bernasconi_kollar_vanishing}.
\begin{proof}[Proof of Corollary \ref{C2}]
The proof follows exactly as in \cite{bernasconi_kollar_vanishing}. The idea is to take a log resolution \( g: Y \to X \) of the pair \( (X, \Delta) \), and run a Minimal Model Program (MMP) starting with the pair \( (Y, \Theta) \), where
\[
\Theta = g_*^{-1} \Delta + \sum_{i=1}^n E_i,
\]
and \( \{ E_i \} \) denotes the exceptional divisors of \( g \). The authors first establish Grauert–Riemenschneider vanishing for the simple normal crossing (snc) pair \( (Y, \Theta) \), and then show that this vanishing is preserved at each step of the MMP.

At every step, the MMP involves either a flip or a divisorial contraction. In the case of a divisorial contraction, suppose that a divisor \( S_i \) is contracted at the \( i \)-th step. There are two possibilities for \( S_i \):
\begin{itemize}
    \item If \( S_i \) is contracted to a point, we apply the Kawamata–Viehweg vanishing theorem for del Pezzo type surfaces (Theorem \ref{T1}) in place of \cite[Theorem 1.1]{arvidson_vanishing}, thereby extending the argument to arbitrary (possibly imperfect) fields.
    \item If \( S_i \) is contracted to a curve, the argument proceeds exactly as in the original proof.
\end{itemize}

In the first case, the original argument in \cite{bernasconi_kollar_vanishing} relies on the assumption that the residue fields of the closed points are perfect of characteristic \( p > 5 \), in order to apply \cite[Theorem 1.1]{arvidson_vanishing}. In our setting, we instead invoke Theorem \ref{T1} to accommodate arbitrary base fields, thereby generalizing the result.

\end{proof}
\begin{proof}[Proof of Corollary \ref{C1}]
   Let $B \subset \lfloor \Delta \rfloor$ be a divisor. We shall show that $(X, B)$ is a rational pair.
In particular, it shows that $X$ has rational singularities. Let $g: Y \longrightarrow (X, \Delta)$ be a thrifty log resolution. We shall show that $g$ is a rational resolution. Let $B_Y$ and $\Delta'_Y$ be the strict transforms of $B$ and $\Delta' = \Delta - B$, respectively.

We write:
$$
K_Y + B_Y + \Delta'_Y + F = g^*(K_X + \Delta) + E
$$
such that $E$ and $F$ are exceptional effective divisors without common exceptional components, and $\lfloor F \rfloor = 0$. We observe that
$$
\lceil E \rceil - B_Y \sim_{g, \mathbb{R}} K_Y + \Delta'_Y + F + (\lceil E \rceil - E).
$$

Using Corollary \ref{C2}, we apply Grauert–Riemenschneider vanishing to $\mathcal{O}_Y(\lceil E \rceil - B_Y)$ and $\omega_Y(B_Y)$ to obtain:
$$
R^i g_* \mathcal{O}_Y(\lceil E \rceil - B_Y) = 0 \quad \text{and} \quad R^i g_* \omega_Y(B_Y) = 0.
$$

The rest of the proof follows along the same lines as \cite[Theorem 2.87]{kollar_singularities_2013}.

Since klt singularities are dlt, and admit a thrifty log resolution, it follows that klt singularities are rational.
\end{proof}

\begin{proof}[Proof of Corollary \ref{C3}]
   The proof is the same as in \cite[Theorem 1.2]{bernasconi2020vanishingtheoremthreefoldscharacteristic}. The key step is to show that \( R^i f_* \mathcal{O}_X = 0 \). It is shown in \cite{tanaka_vanishing_2017} and \cite{bernasconi_tanaka} that this higher direct image sheaf vanishes on an open subset whose complement consists of finitely many points. To handle the behavior at these points, the authors consider a 
   birational transform \( Y \), as constructed in Lemma \ref{l2}. In particular, to prove the vanishing at the point \( v \), it suffices to show that \( R^i g_* \mathcal{O}_Y = 0 \). This follows from the argument used in the proof of the claim appearing in Theorem \ref{T1}, where we apply our generalization to extend the result to the imperfect field setting

\end{proof}

\bibliographystyle{acm}
\bibliography{Library}

\begin{thebibliography}{10}

\bibitem{arvidson_vanishing}
{\sc Arvidsson, E., Bernasconi, F., and Lacini, J.}
\newblock On the {K}awamata--{V}iehweg vanishing theorem for log del {P}ezzo surfaces in positive characteristic.
\newblock {\em Compos. Math. 158}, 4 (2022), 750--763.

\bibitem{baudin2025grauertriemenschneidervanishingcohenmacaulayschemes}
{\sc Baudin, J., Kawakami, T., and Rösler, L.}
\newblock On grauert-riemenschneider vanishing for cohen-macaulay schemes of klt type, 2025.

\bibitem{bernasconi2020vanishingtheoremthreefoldscharacteristic}
{\sc Bernasconi, F.}
\newblock A vanishing theorem for threefolds in characteristic $p>5$ and applications, 2020.

\bibitem{bernasconi_kollar_vanishing}
{\sc Bernasconi, F., and Koll\'{a}r, J.}
\newblock Vanishing theorems for threefolds in characteristic $p>5$.
\newblock {\em Int. Math. Res. Not. 2023}, 4 (2023), 2846--2866.

\bibitem{bernasconi_tanaka}
{\sc Bernasconi, F., and Tanaka, H.}
\newblock On del {P}ezzo fibrations in positive characteristic.
\newblock {\em J. Inst. Math. Jussieu doi:10.1017/S1474748020000067\/} (2020).

\bibitem{bhatt_cohen}
{\sc Bhatt, B.}
\newblock Cohen-{M}acaulayness of absolute integral closures.
\newblock {\em arXiv:2008.08070\/} (2020).

\bibitem{bhatt2022globallyregularvarietiesminimal}
{\sc Bhatt, B., Ma, L., Patakfalvi, Z., Schwede, K., Tucker, K., Waldron, J., and Witaszek, J.}
\newblock Globally +-regular varieties and the minimal model program for threefolds in mixed characteristic, 2022.

\bibitem{bruns_cohen-macaulay_1993}
{\sc Bruns, W., and Herzog, J.}
\newblock {\em Cohen-{Macaulay} rings}, vol.~39 of {\em Cambridge {Studies} in {Advanced} {Mathematics}}.
\newblock Cambridge University Press, Cambridge, 1993.

\bibitem{cascini_log_2017}
{\sc Cascini, P., Tanaka, H., and Witaszek, J.}
\newblock On log del {Pezzo} surfaces in large characteristic.
\newblock {\em Compos. Math. 153}, 04 (2017), 820--850.

\bibitem{das_waldron_imperfect}
{\sc Das, O., and Waldron, J.}
\newblock On the log minimal model program for $3$-folds over imperfect fields of characteristic $p>5$.
\newblock {\em J. Lond. Math. Soc. (2) 106}, 4 (2022), 3895--3937.

\bibitem{MR621766}
{\sc Elkik, R.}
\newblock Rationalit\'e{} des singularit\'es canoniques.
\newblock {\em Invent. Math. 64}, 1 (1981), 1--6.

\bibitem{Filipazzi_2024}
{\sc Filipazzi, S., and Waldron, J.}
\newblock Connectedness principle for 3-folds in characteristic p\&gt;5.
\newblock {\em Michigan Mathematical Journal 74}, 4 (Sept. 2024).

\bibitem{gongyo_rational_2015-1}
{\sc Gongyo, Y., Nakamura, Y., and Tanaka, H.}
\newblock Rational points on log {Fano} threefolds over a finite field.
\newblock {\em J. Eur. Math. Soc. 21}, 12 (2019), 3759--3795.

\bibitem{hacon_witaszek_rationality}
{\sc Hacon, C.~D., and Witaszek, J.}
\newblock On the rationality of {K}awamata log terminal singularities in positive characteristic.
\newblock {\em Algebraic Geometry 6}, 5 (2019), 516--529.

\bibitem{kollar_singularities_1996}
{\sc Koll{\'a}r, J.}
\newblock Singularities of {Pairs}.
\newblock In {\em eprint {arXiv}:alg-geom/9601026\/} (1996), p.~1026.

\bibitem{kollar_local_2011}
{\sc Koll{\'a}r, J.}
\newblock A local version of the {Kawamata}-{Viehweg} vanishing theorem.
\newblock {\em Pure Appl. Math. Q. 7}, 4, Special Issue: In memory of Eckart Viehweg (2011), 1477--1494.

\bibitem{kollar_singularities_2013}
{\sc Koll{\'a}r, J.}
\newblock {\em Singularities of the minimal model program}, vol.~200 of {\em Cambridge {Tracts} in {Mathematics}}.
\newblock Cambridge University Press, Cambridge, 2013.

\bibitem{kollar_birational_1998}
{\sc Koll{\'a}r, J., and Mori, S.}
\newblock {\em Birational geometry of algebraic varieties}, vol.~134 of {\em Cambridge {Tracts} in {Mathematics}}.
\newblock Cambridge University Press, Cambridge, 1998.

\bibitem{lacini_dP}
{\sc Lacini, J.}
\newblock On rank one log del {P}ezzo surfaces in characteristic different from two and three.
\newblock {\em Adv. Math. 442\/} (2024), 109568.

\bibitem{patakfalvi_depth_2014}
{\sc Patakfalvi, Z., and Schwede, K.}
\newblock Depth of \${F}\$-singularities and base change of relative canonical sheaves.
\newblock {\em J. Inst. Math. Jussieu 13}, 1 (2014), 43--63.

\bibitem{stacks-project}
{\sc {Stacks project authors}, T.}
\newblock The stacks project.
\newblock \url{https://stacks.math.columbia.edu}, 2025.

\bibitem{tanaka_vanishing_2017}
{\sc Tanaka, H.}
\newblock Vanishing theorems of {Kodaira} type for {Witt} canonical sheaves.
\newblock {\em arXiv:1707.04036 [math]\/} (2017).

\bibitem{tanaka_minimal_2016-1}
{\sc Tanaka, H.}
\newblock Minimal model program for excellent surfaces.
\newblock {\em Ann. Inst. Fourier 68}, 1 (2018), 345--376.

\bibitem{totaro_failure}
{\sc Totaro, B.}
\newblock The failure of {K}odaira vanishing for {F}ano varieties, and terminal singularities that are not {C}ohen-{M}acaulay.
\newblock {\em J. Algebraic Geom. 28\/} (2019), 751--771.

\bibitem{totaro2024terminal3foldscohenmacaulay}
{\sc Totaro, B.}
\newblock Terminal 3-folds that are not cohen-macaulay, 2024.

\end{thebibliography}

\end{document}